\newtheorem{theorem}{\bf Theorem}
\newtheorem{corollary}[theorem]{\bf Corollary}
\newtheorem{proposition}[theorem]{\bf Proposition}
\newtheorem{definition}[theorem]{\bf Definition}
\newtheorem{remark}[theorem]{\bf Remark}
\numberwithin{equation}{section}
\numberwithin{theorem}{section}
\numberwithin{figure}{section}
\def\R{\mathbb{R}}
\def\L{\mathbb{L}}
\def\R{\mathbb{R}}
\begin{document}
\renewcommand{\thefootnote}{}
\footnotetext{The authors were partially supported by MICINN-FEDER, Grant No. MTM2016- 80313-P, Junta de Andaluc\'ia Grant No. FQM325 }

\title{A Calabi's Type Correspondence} 
\author{Antonio Mart\'{\i}nez and A. L. Mart\'inez-Trivi\~no}
\vspace{1in}
\date{}
\maketitle
\noindent {\footnotesize Department of Geometry and Topology, University of Granada, E-18071 Granada, Spain.\\ 
e-mails: amartine@ugr.es, aluismartinez@ugr.es}
\begin{abstract}
\noindent Calabi  observed that  there is a natural  correspondence between the 
solutions of the minimal surface equation in $\R^3$  with those of the maximal spacelike  surface equation in $\L^3$. We are going to show how this correspondence can be extended to the family  of $\varphi $-minimal  graphs in $\R^3 $ when the function  $\varphi$ is invariant under a two-parametric group of translations. We give also applications in the study and description of new examples.
\end{abstract}

\noindent 2010 {\it  Mathematics Subject Classification}:  53C42; 35J60.

\noindent {\it Keywords:}  $\varphi$-minimal, spacelike surface,  $\varphi$-maximal, elliptic equation. 
\everymath={\displaystyle}
\section{Introduction}
Differential geometry of surfaces and partial differential equations (PDEs) have a strong link by means of which both theories benefit mutually. Actually, many classic PDEs are linked to interesting geometric problems. The geometry allows to integrate these equations, to establish non trivial properties of the solutions and to give some superposition principles which determine new solutions in terms of already known solutions.
The classical theory of surfaces shows that geometric transformations may also be used to construct new surfaces from a given one.

\

Two of the most studied geometric PDEs has been
\begin{equation}\label{mle}
div\left(\frac{\nabla u}{\sqrt{1 + \epsilon |\nabla u|^2}}\right)= 0, \qquad \epsilon\in\{1,-1\}.
\end{equation}
They are the Euclidean minimal equation ($\epsilon= 1$) and the  Lorentzian spacelike maximal equation ($\epsilon = -1$). Calabi proved in \cite{Calabi}, there is a natural  one to one correspondence between the  solutions of \eqref{mle} with $\epsilon= 1$ and the solutions of \eqref{mle} with $\epsilon=-1$. This relationship gives a (local) connection  between Euclidean minimal graphs and Lorentzian spacelike maximal graphs which is useful  for  describing  examples and for applying similar methods to the study of their geometrical and topological properties.  What we offer in this work is an extension of this correspondence to solutions of

\begin{equation}
div\left(\frac{\nabla u}{\sqrt{1 + \epsilon |\nabla u|^2}}\right)= \frac{G(u)}{\sqrt{1 + \epsilon |\nabla u|^2}}, \qquad \text{ in $\Omega_\epsilon\subseteq \R^2$},\label{leeq}
\end{equation}
where $\Omega_\epsilon$ is a simply connected  domain, $G(u)$ is a function of $u$ and  $ \epsilon\in\{1,-1\}$.

\

For $\epsilon=1$, \eqref{leeq} describes the equilibrium condition of a graph in $\R^3$ in a external conservative force field whose potential is invariant under a two-parameter group of translations. When $G(u)\equiv 1$, the graphs of a solutions of \eqref{leeq} are {\sl translating solitons}, that is, solutions of the mean curvature flow which move by either a vertical translation in $\R^3$ if $\epsilon=1$ or in $\L^3$ if  $\epsilon=-1$. 

The paper is organized as follows,   Sections 2 and 3 deal  with  some elementary  facts of the theory of $\varphi$-minimal surfaces in $\R^3$ and spacelike $\varphi$-maximal  surfaces in $\L^3$.
In Section 4, we give a  Calabi's type corresponde between   the solutions  of \eqref{leeq} with $\epsilon= 1$ and the solutions of \eqref{leeq} with $\epsilon=-1$. Finally, Section 5 is devoted to give some applictions in the description and study of new examples. 

\section{ The $\varphi$-minimal equation}
The equilibrium condition for a surface $\Sigma$ in $\R^3$ in a external force field ${\cal F}$ was described by Poisson \cite[pp. 173-187]{Poisson}.  When  the intrinsic forces of the surface  are assumed to be equal,  ${\cal F}$ must be conservative and it writes as $ {\cal F}= \overline{\nabla} (\mathrm{e}^\varphi),$ for some smooth function $\varphi$ on a domain of $\R^3$ which contains  $\Sigma$.   In this case,   the equilibrium condition is given  in terms of the mean curvature vector  ${\text {\bf H}}$ of $\Sigma$ as follows: 
\begin{align} 
&{\text {\bf H}} = \left(\overline{\nabla} \varphi \right)^\perp \label{fminimal}
\end{align}
where $ \overline{\nabla} $ is  the gradient operator in $\R^3$ and  $\perp$ denotes the projection to the normal bundle of  $\Sigma$.

\

A surface  $\Sigma$  satisfying \eqref{fminimal} is called  $\varphi$-{\sl minimal} and it can be also viewed  either  as  a critical point of the weighted volume functional 
\begin{equation}
V_{\varphi} (\Sigma):= \int_\Sigma  \mathrm{e}^\varphi\ dA_\Sigma,
\end{equation}
where $dA_\Sigma$ is the volume element of $\Sigma$, or as a minimal surface in  the conformally changed metric 
\begin{equation} 
G_\varphi:=  \mathrm{e}^\varphi\ \langle . , . \rangle.
\end{equation}

%In this paper, we will assume that $\varphi$  only depend on the third coordinate, that is, it is invariant under a two-parameter group of horizontal translations. Under this assumption will show how to extend the Calabi's correspondence to  $\varphi$-minimal graphs

For the particular case that the function  $\varphi$ is invariant under a two-parameter group of translations in $\R^3$ we have  some interesting examples: 
\begin{itemize}
\item {\sl Minimal surfaces},  if   $\varphi$ is  constant.
\item  {\sl Translating solitons}, if  $\varphi $ is a linear function. 
\item {\sl Singular minimal surfaces} (also called {\sl cupolas} because they arise by considering  ${\cal F}$ as  the gravitacional force field) if $\varphi (p) = \alpha \log( \langle p , \vec{v}\rangle$), where  $\alpha\in \R$ and $\vec{v}$ is a constant vector. 
\end{itemize}
Some references in the study of the above examples are  \cite{R1,MSHS2,MSHS1,Os}.

\

When  $\varphi$ is invariant under a two-parameter group of translations then, up to a motion in $\R^3$, we can consider  that the external force field  ${\cal F}$ is always  a vertical vector field, that is, 
\begin{equation*}{\cal F} \wedge \vec{e_3} = \mathrm{e}^{\varphi} \overline{\nabla}\varphi \wedge \vec{e_3}\equiv  0, 
\end{equation*}
where $\vec{e_3}=(0,0,1)$ and   $\varphi$ {\sl only depends on the third coordinate} in $\R^3$. In this case, the condition \eqref{fminimal} writes as  
\begin{equation}
{\text {\bf H}}=  \dot{\varphi} \ \vec{e}_3^\perp,\label{vforce}
\end{equation}
where $(\ \dot{ }\ )$ denotes derivate respect to the third coordinate.
\begin{definition}
A $\varphi$-minimal surface $\Sigma$ whose mean curvature vector satisfies  \eqref{vforce} will be called  {\sl $[\varphi, \vec{e_3}]$-minimal}.
\end{definition}
Let $u:\Omega_1 \longrightarrow \R$ be a regular function on a simply connected planar domain $\Omega_1$ and consider  $ \psi(x,y) = (x,y,u)$ its graph. Then, the induced metric, Gauss map and mean curvature vector of $\psi$ write, respectively,  by 
\begin{align} 
& g:=  (1 + u_x^2)dx^2 + (1+u_y^2)dy^2 + 2 u_x u_y dx dy,\label{metric}
\\
& N:=  \frac{1}{W} ( -u_x,-u_y,1), \quad W=\sqrt{1 + u_x^2 + u_y^2},\label{normal}\\
& \text{\bf H}:= -\frac{1}{W^3} {\cal L}u \ N, \label{mc}
\end{align}
where,
\begin{align*}
& {\cal L}u = (1+u_x^2) u_{yy} + (1+u_y^2) u_{xx} - 2 u_x u_y u_{xy} ,  
\end{align*}
Hence and  from \eqref{vforce}, we get that $\psi$ is a $[\varphi, \vec{e}_3]$-minimal graph  if and only if 
 $u$ is a solution of the  following elliptic  partial differential equation:
\begin{equation}
 (1+u_x^2) u_{yy} + (1+u_y^2) u_{xx} - 2 u_x u_y u_{xy} =  \dot{\varphi}(u) W^2.  \label{fe}
\end{equation}
Using \eqref{fe}, we  can see
\begin{align*}
&\left( \frac{ 1 + u_y^2}{W} \mathrm{e}^{\varphi(u)}\right)_x -\left( \frac{u_x u_y}{W} \mathrm{e}^{\varphi(u)} \right)_y = - \frac{ u_x  \ \mathrm{e}^{\varphi(u)}{\cal L}u }{W^3} +\frac{u_x \ \mathrm{e}^{\varphi(u)} \dot{\varphi}(u) }{W} = 0,\\
&\left( \frac{ 1 + u_x^2}{W} \mathrm{e}^{\varphi(u)} \right)_y -\left( \frac{u_x u_y}{W} \mathrm{e}^{\varphi(u)} \right)_x= - \frac{ u_y \ \mathrm{e}^{\varphi(u)} {\cal L}u }{W^3} +\frac{u_y \ \mathrm{e}^{\varphi(u)} \dot{\varphi}(u) }{W} =0.
\end{align*}
The equation  \eqref{fe}, is equivalent to the integrability of the following differential system,
\begin{equation} 
\label{integrabilidad}
 \phi_{xx}=  \frac{ 1 +  u_x^2}{ W} \mathrm{e}^{\varphi (u)}, 
\quad 
\phi_{xy}=  \frac{ u_xu_y }{W} \mathrm{e}^{\varphi(u)}, 
 \quad   
\phi_{yy}=  \frac{ 1 +   u_y^2}{W} \mathrm{e}^{\varphi(u)},
\end{equation}
for a convex function  $\phi:\Omega_1 \longrightarrow \R$ (unique, module linear polynomials).
\\
From \eqref{metric} and \eqref{fe}, the laplacian operator $\Delta_g$ of $g$ is given by
\begin{align}
W^2 \Delta_g & :=  (1 + u_x^2)\partial^2_{yy} + (1+ u_y^2)\partial^2_{xx} - 2 u_x u_y \partial^2_{xy} - \dot{\varphi} (u)(u_x \partial_x + u_y \partial_y ),\label{laplacian}
\end{align}
and  a straightforward computation gives,
\begin{align}
& \Delta_g \psi = (\overline{\nabla}\varphi)^\perp = \langle N,\vec{e}_3\rangle \dot{\varphi}(u) \ N,
\label{l1}\\
& \Delta_g \phi_x =  \frac{\mathrm{e}^{\varphi(u)} \dot{\varphi}(u)}{W} u_x = \langle N,\vec{e}_3\rangle \mathrm{e}^{\varphi(u)} \dot{\varphi}(u) \ u_x, \label{l2}
\\
& \Delta_g \phi_y =  \frac{\mathrm{e}^{\varphi(u)} \dot{\varphi}(u)}{W} u_y = \langle N,\vec{e}_3\rangle \mathrm{e}^{\varphi(u)} \dot{\varphi}(u) \  u_y,\label{l3}
\\
& \Delta_g (\int \mathrm{e}^{\varphi(u)} du) = \mathrm{e}^{\varphi(u)} \dot{\varphi}(u)= \langle N,\vec{e}_3\rangle\mathrm{e}^{\varphi(u)} \dot{\varphi}(u) \  W. \label{l4}
\end{align}
\section{The $\varphi$-maximal equation}
Let $\L^3$ be the Minkowski space $\R^3$ with the Lorentz metric 
\begin{equation} \ll . ,  . \gg = dx^2 + dy^2 - dz^2.\label{mmetric}\end{equation}
A surface in $\mathbb{L}^{3}$ is called \textit{spacelike} if the induced metric on the surface is a positive definite Riemannian metric. This kind of surfaces have played a major role in Lorentzian geometry,  for a survey of some results we refer to \cite{Bar}.

\

A spacelike surface  $ {\widetilde \Sigma}$ in $\L^3$ is called  $\varphi$-{\sl maximal} if its mean curvature vector  $ {\widetilde{\text {\bf  H}}}$  satisfies
\begin{align} 
& {\widetilde {\text {\bf H}}} = \left( \overline{\nabla}^{\L^3} \varphi \right)^\perp,\label{fmaximal}
\end{align}
where $\overline{\nabla}^{\L^3}$ denotes   the the gradient operator in $\L^3$ and $\varphi$ is a smooth function on a domain in $\L^3$ containing $ {\widetilde \Sigma}$.

\

As in the Euclidean case, 
a  $\varphi$-maximal spacelike  surface   can be also viewed either   a critical point of the weighted volume functional 
\begin{equation}
{\widetilde V}_{\varphi} ({\widetilde \Sigma}):= \int_{{\widetilde \Sigma}}  \mathrm{e}^\varphi\ dA_{{\widetilde \Sigma}},
\end{equation}
 or a  maximal (zero mean curvature) spacelike surface in  the conformally changed metric 
\begin{equation} 
{\widetilde G}_\varphi:=  \mathrm{e}^\varphi \ll.,.\gg.
\end{equation}
\begin{definition}
{\rm If $\varphi$ only depend on the third coordinate, any spacelike surface  with mean curvature vector satisfying   \eqref{fmaximal} will be called $[\varphi, \vec{e}_3]$-maximal.

\

Well known  examples of $[\varphi, \vec{e}_3]$-maximal  are  the maximal  surfaces and the  translating solitons, whose study is an exciting and already classical mathematical research field,  see \cite{CY,QD} for some results. In analogy to the Euclidean case, a  $[\varphi, \vec{e}_3]$-maximal  spacelike with $\varphi(p)= \alpha \log{<p,\vec{e}_3> }$, $\alpha\in \R$, $\alpha\neq 0$ will be called {\sl singular $\alpha$-maximal} surface.}
\end{definition}

By a strightforward computation, if $\Omega_{-1}$ is a simply connected planar domain, it is not difficult to prove that the vertical graph  in $\L^3$ of a function $\overline{u}:\Omega_{-1} \longrightarrow \R$ is a  $[{\varphi}, \vec{e}_3]$-maximal spacelike  if and only if $\overline{u}$ is a solution of the  following elliptic partial differential equation:
\begin{equation}
 (1-\overline{u}_x^2) \overline{u}_{yy} +  (1-\overline{u}_y^2) \overline{u}_{xx} + 2  \overline{u}_x  \overline{u}_x  \overline{u}_{xy} +\dot{\varphi} (\overline{u})\overline{W}^2=0, \label{Lfe}
\end{equation}
where  $\overline{W}= \sqrt{ 1-\overline{u}_x^2- \overline{u}_y^2}$.
From \eqref{mmetric} and \eqref{Lfe}, the laplacian operator $\Delta_{\overline{g}}$ of the induced metric $\overline{g}$  is given by
\begin{align}
\overline{W}^2 \Delta_{\overline{g}} & :=  (1 - \overline{u}_x^2)\partial^2_{yy} + (1-\overline{u}_y^2)\partial^2_{xx} + 2\overline{u}_x\overline{u}_y \partial^2_{xy} - \dot{\varphi}(\overline{u}) (\overline{u}_x \partial_x + \overline{u}_y \partial_y ),\label{llaplacian}
\end{align}
and   the equation  \eqref{Lfe}, is equivalent to the integrability of the following differential system,
\begin{equation}  \overline{\phi}_{xx}=  \frac{ 1 -   \overline{u}_x^2}{ \overline{W}} \mathrm{e}^{\varphi(\overline{u})}, 
\quad 
 \overline{\phi}_{xy}=  - \frac{ \overline{u}_x \overline{u}_y}{ \overline{W}} \mathrm{e}^{\varphi(\overline{u})}, 
 \quad   
\overline{\phi}_{yy}=  \frac{ 1 -   \overline{u}_y^2}{ \overline{W}} \mathrm{e}^{\varphi(\overline{u})}\label{lintegra}
\end{equation}
for a convex function  $\overline{\phi}:\Omega_{-1} \longrightarrow \R$ (unique, module linear polynomials).
\section{ The correspondence}
Calabi observed,  \cite{Calabi},   there is a natural  correspondence between all solutions of the minimal surface equation in $\R^3$  with those of the maximal spacelike  surface equation in $\L^3$. 
\vspace{0.2 cm}
We are going to show how this correspondence can be extended to the family of $[\varphi ,\vec{e_3}]$-minimal  graphs in $\R^3 $.
\begin{theorem} \label{th1} Let $\Omega_1$ be a simply connected planar domain,  $\psi: \Omega_1 \longrightarrow \R^3$, $\psi(x,y)= (x,y,u)$ be a $[\varphi ,\vec{e_3}]$-minimal  graph in $\R^3 $, $\phi$ be a solution to the system \eqref{integrabilidad} and $\vartheta$ be a primitive function of $\mathrm{e}^\varphi $ (that is, $\dot{\vartheta}=\mathrm{e}^\varphi)$. Then  $\widetilde{\psi}: \Omega_1 \longrightarrow \L^3$ given by
\begin{equation}
 \widetilde{\psi} := (\phi_x,\phi_y, \vartheta(u) ), \label{correspondencia1}
\end{equation}
is a $[-\varphi\circ \vartheta^{-1} ,\vec{e_3}]$-maximal  spacelike graph in the Lorentz-Minkowski space whose  Gauss map
$ \widetilde{N} $ writes
\begin{align}
\widetilde{N} &= (u_x, u_y, W), 
\label{phiL3}
\end{align}
\noindent The induced metrics $g$ and $\widetilde{g}$ of $\psi$ and $\widetilde{\psi}$, respectively,  are conformal and  the mean curvature $H$ ($\widetilde{H}$) and  Gauss curvature $K$ ($\widetilde{K}$ ) of  $\psi$ ($\widetilde{\psi}$) satisfy
\begin{align}
&  \widetilde{H} + W^2 \mathrm{e}^{-\varphi(u)}   H = 0,\label{hh}\\
&\widetilde{K}+ W^4  \mathrm{e}^{-2\varphi(u)}K = 0.\label{kk}
\end{align}
\end{theorem}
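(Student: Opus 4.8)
The plan is to extract everything from the first and second order derivatives of $\widetilde{\psi}$, which are completely controlled by the integrability system \eqref{integrabilidad} together with $\dot{\vartheta}=\mathrm{e}^{\varphi}$. First I would differentiate \eqref{correspondencia1} to get $\widetilde{\psi}_x=(\phi_{xx},\phi_{xy},\mathrm{e}^{\varphi(u)}u_x)$ and $\widetilde{\psi}_y=(\phi_{xy},\phi_{yy},\mathrm{e}^{\varphi(u)}u_y)$, and substitute the values of $\phi_{xx},\phi_{xy},\phi_{yy}$ from \eqref{integrabilidad}. Computing the induced Lorentzian metric then collapses, after using $u_x^2+u_y^2=W^2-1$, to $\widetilde{g}=\tfrac{\mathrm{e}^{2\varphi(u)}}{W^2}\,g$; since the conformal factor is positive this simultaneously proves that $\widetilde{g}$ is Riemannian (so $\widetilde{\psi}$ is spacelike) and that $g$ and $\widetilde{g}$ are conformal. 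The same substitution shows that $\widetilde{N}=(u_x,u_y,W)$ is $\ll\cdot,\cdot\gg$-orthogonal to $\widetilde{\psi}_x,\widetilde{\psi}_y$ and satisfies $\ll\widetilde{N},\widetilde{N}\gg=-1$, i.e. it is the unit timelike Gauss map \eqref{phiL3}. To justify the word \emph{graph}, I would note that \eqref{integrabilidad} gives $\det\mathrm{Hess}\,\phi=\mathrm{e}^{2\varphi(u)}>0$ with $\phi_{xx}>0$, so $\phi$ is strictly convex and $(x,y)\mapsto(\phi_x,\phi_y)$ is a diffeomorphism onto $\Omega_{-1}$; thus $\widetilde{\psi}$ is the graph over $\Omega_{-1}$ of $\overline{u}=\vartheta(u)\circ(\phi_x,\phi_y)^{-1}$.

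For the curvature identities \eqref{hh}--\eqref{kk} I would compute the second fundamental form through $\widetilde{II}_{ij}=-\ll\widetilde{\psi}_i,\widetilde{N}_j\gg$, which involves only $\widetilde{N}_x=(u_{xx},u_{xy},W_x)$ and $\widetilde{N}_y=(u_{xy},u_{yy},W_y)$ and hence no third derivatives of $\phi$. After cancelling the $W_x,W_y$ terms against the Hessian of $\phi$, each entry reduces to $\widetilde{II}_{ij}=-\mathrm{e}^{\varphi(u)}\,II_{ij}$, where $II_{ij}=\tfrac{1}{W}u_{ij}$ is the Euclidean second fundamental form of $\psi$ relative to $N$. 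Combining $\widetilde{II}=-\mathrm{e}^{\varphi}II$ with $\widetilde{g}=\tfrac{\mathrm{e}^{2\varphi}}{W^2}g$, the trace formula $2\widetilde{H}=\widetilde{g}^{ij}\widetilde{II}_{ij}$ yields $\widetilde{H}=-W^2\mathrm{e}^{-\varphi}H$, which is \eqref{hh}, and the determinant formula for the extrinsic Gauss curvature, with the sign dictated by the timelike normal ($\widetilde{K}=-\det\widetilde{II}/\det\widetilde{g}$ in $\L^3$ against $K=\det II/\det g$ in $\R^3$), yields $\widetilde{K}=-W^4\mathrm{e}^{-2\varphi}K$, which is \eqref{kk}.

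Finally, to identify $\widetilde{\psi}$ as $[-\varphi\circ\vartheta^{-1},\vec{e_3}]$-maximal I would avoid recomputing anything and instead read off the Laplacian relations \eqref{l2}, \eqref{l3}, \eqref{l4}: recalling $\vartheta(u)=\int\mathrm{e}^{\varphi(u)}\,du$, they say precisely that $\Delta_g\widetilde{\psi}=\tfrac{\mathrm{e}^{\varphi(u)}\dot{\varphi}(u)}{W}\,\widetilde{N}$. Because $g$ and $\widetilde{g}$ are conformal in dimension two, $\Delta_{\widetilde{g}}=\tfrac{W^2}{\mathrm{e}^{2\varphi}}\Delta_g$, and Beltrami's formula $\widetilde{\mathbf{H}}=\Delta_{\widetilde{g}}\widetilde{\psi}$ gives $\widetilde{\mathbf{H}}=\tfrac{W\dot{\varphi}(u)}{\mathrm{e}^{\varphi(u)}}\,\widetilde{N}$. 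It then remains to check that this equals $\bigl(\overline{\nabla}^{\L^3}(-\varphi\circ\vartheta^{-1})\bigr)^\perp$: writing the third coordinate as $\tilde z=\vartheta(u)$, the chain rule gives $\tfrac{d}{d\tilde z}(-\varphi\circ\vartheta^{-1})=-\dot{\varphi}(u)\mathrm{e}^{-\varphi(u)}$, so the Lorentzian gradient is $\tfrac{\dot{\varphi}(u)}{\mathrm{e}^{\varphi(u)}}\vec{e_3}$, and projecting with $(\vec{e_3})^\perp=-\ll\vec{e_3},\widetilde{N}\gg\widetilde{N}=W\widetilde{N}$ reproduces $\widetilde{\mathbf{H}}$ exactly. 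The main obstacle throughout is the bookkeeping of Lorentzian signs — the timelike character of $\widetilde{N}$, the flip in index-raising for the ambient gradient, and the corresponding sign in the Gauss equation — rather than any genuine estimate; once these conventions are fixed consistently with Sections 2 and 3, every identity reduces to the algebraic cancellations sketched above.
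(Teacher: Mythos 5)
Your proposal is correct and follows essentially the same route as the paper: compute $\widetilde{g}$ and $\widetilde{N}$ directly from the integrability system \eqref{integrabilidad} to get the conformal factor $\mathrm{e}^{2\varphi(u)}/W^2$, use the Laplacian identities \eqref{l2}--\eqref{l4} together with conformal invariance of the two-dimensional Laplacian to identify $\widetilde{\mathbf{H}}=\Delta_{\widetilde{g}}\widetilde{\psi}$ as the normal part of $\overline{\nabla}^{\L^3}(-\varphi\circ\vartheta^{-1})$, and derive \eqref{hh}--\eqref{kk} from the pointwise relation between the two second fundamental forms. The only additions are welcome bookkeeping the paper leaves implicit (strict convexity of $\phi$ to justify the graph claim, and the Lorentzian sign in the Gauss equation).
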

\begin{proof}
From \eqref{integrabilidad}, \eqref{phiL3} and \eqref{correspondencia1}, we have
\begin{align}
& \ll\widetilde{\psi}_x, \widetilde{N}\gg= \ll\widetilde{\psi}_y, \widetilde{N}\gg=0, \quad  \ll\widetilde{N}, \widetilde{N}\gg=-1,\\
& \widetilde{g}= \ll d\widetilde{\psi},d\widetilde{\psi}\gg= (d\phi_x)^2 + (d\phi_y)^2 - (d\vartheta)^2 = \frac{\mathrm{e}^{ 2\varphi(u)}}{W^2} g. \label{cmetric}
\end{align}
So,  $\widetilde{\psi}$ is spacelike graph with Gauss map $\widetilde{N}$. As $\widetilde{g}$ and $g$ are conformal metrics, from \eqref{l2}, \eqref{l3},  \eqref{phiL3} and \eqref{cmetric}, we get that 
$$ \widetilde{{\text {\bf H}}} = \Delta_{\widetilde{g}} \widetilde{\psi} = -\frac{W \dot{\varphi}(u)}{ \mathrm{e}^{\varphi(u)}}\widetilde{N} =-\frac{d\varphi}{d\vartheta}W\widetilde{N}= \left(- \overline{\nabla}^{\L^3}\varphi\circ \vartheta^{-1}\right)^\perp,$$
and $\widetilde{\psi}$ is $[-\varphi\circ\vartheta^{-1} ,\vec{e_3}]$-maximal.

\

Finally, from \eqref{integrabilidad} and \eqref{phiL3}, we have 
\begin{align}
& \ll\widetilde{\psi}_x, \widetilde{N}_x\gg= \frac{ \mathrm{e}^{\varphi(u)}}{W} u_{xx} = - \mathrm{e}^{\varphi(u)}<\psi_x,N_x>,\\
& \ll\widetilde{\psi}_x, \widetilde{N}_y\gg= \frac{ \mathrm{e}^{\varphi(u)}}{W} u_{xy} = - \mathrm{e}^{\varphi(u)}<\psi_x,N_y>,\\
& \ll\widetilde{\psi}_y, \widetilde{N}_y\gg= \frac{ \mathrm{e}^{\varphi(u)}}{W} u_{yy} = - \mathrm{e}^{\varphi(u)}<\psi_y,N_y>.
\end{align}
Thus, the shapes operators $A$ ($\widetilde{A}$) of   $\psi$ ($\widetilde{\psi}$) are related as follows:
 \begin{equation}
 \widetilde{A} +   \mathrm{e}^{-\varphi(u)} W^2\ A =0,  \label{sff}
\end{equation}
which, together \eqref{cmetric} let us to prove  that \eqref{hh} and \eqref{kk} hold.
\end{proof}
\begin{remark} Observe that $\widetilde{\psi}$ is a graph on the Legendre transform domain $\Omega_{-1}$ of $\phi$.
\end{remark}
\begin{remark}
The correspondence \eqref{correspondencia1} can be given globally as follows
\begin{equation} \label{globalc}
\widetilde{\psi} = \int  \mathrm{e}^{\varphi (<\psi,\vec{e}_3>) } (\vec{e}_3\wedge (d\psi \wedge N) +  <d\psi,\vec{e}_3> \vec{e}_3),
\end{equation}
where $\wedge$ denotes the cross product in $\R^3$. Moreover,  the singularities of $\widetilde{\psi}$ hold  where the angle  function $<\vec{e}_3,N>$ vanishes.  \end{remark}
Arguing as in  Theorem \ref{th1}, we can prove,
\begin{theorem}\label{th2}
Let  $\Omega_{-1}$ be a simply connected planar domain, $\widetilde{\psi}: \Omega_{-1} \longrightarrow \L^3$, $\widetilde{\psi}(x,y)= (x,y,\overline{u})$ be a $[\varphi ,\vec{e_3}]$-maximal  graph in $\L^3 $ , $\overline{\phi}$ be a solution to the system \eqref{lintegra} and $\vartheta$ be a primitive of $\mathrm{e}^{\varphi }$. Then the immersion given by
\begin{equation}
\psi:= (\overline{\phi}_x,\overline{\phi}_y, \vartheta(\overline{u} )),\label{correspondencia2}
\end{equation}
 is  a $[-\varphi\circ \vartheta^{-1}, \vec{e}_3]$-minimal graph  in $\R^3$ on the Legendre transform domain of $\overline{\phi}$,  whose  induce metric, mean curvature $H$ and Gauss curvature $K$ satisfy
\begin{align}
& g:= \frac{\mathrm{e}^{ 2\varphi(\overline{u})}}{\overline{W}^2} \overline{g},\\
&   H + \mathrm{e}^{-\varphi(\overline{u})} \ \overline{W}^2\  \overline{H} = 0,\label{Lhh}\\
&K  + \mathrm{e}^{-2\varphi(\overline{u})} \ \overline{W}^4\  \overline{K} = 0\label{Lkk},
\end{align}
where $\overline{W}= \sqrt{1- \overline{u}_x^2-\overline{u}_y^2}$ and  $\overline{g}$,  $\overline{H}$ and $\overline{K}$ are the induced metric, the mean curvature and the Gauss curvature of the spacelike graph of $\overline{u}$.
\end{theorem}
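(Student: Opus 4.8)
The plan is to transcribe the proof of Theorem~\ref{th1}, exchanging the roles of $\R^3$ and $\L^3$ and replacing every unbarred quantity by its barred counterpart. Writing $\psi = (\overline{\phi}_x,\overline{\phi}_y,\vartheta(\overline{u}))$ and using $\dot{\vartheta} = \mathrm{e}^{\varphi}$, we have $\psi_x = (\overline{\phi}_{xx},\overline{\phi}_{xy},\mathrm{e}^{\varphi(\overline{u})}\overline{u}_x)$ and $\psi_y = (\overline{\phi}_{xy},\overline{\phi}_{yy},\mathrm{e}^{\varphi(\overline{u})}\overline{u}_y)$. Substituting the integrability relations \eqref{lintegra} into the Euclidean cross product $\psi_x\wedge\psi_y$ and repeatedly using $\overline{W}^2 = 1-\overline{u}_x^2-\overline{u}_y^2$, the off-diagonal terms cancel and one gets $\psi_x\wedge\psi_y = \frac{\mathrm{e}^{2\varphi(\overline{u})}}{\overline{W}}(-\overline{u}_x,-\overline{u}_y,\overline{W})$, of Euclidean length $\mathrm{e}^{2\varphi(\overline{u})}/\overline{W}$. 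Hence $\psi$ is a regular immersion with Euclidean Gauss map $N = (-\overline{u}_x,-\overline{u}_y,\overline{W})$ and angle function $\langle N,\vec{e}_3\rangle = \overline{W}$; since $\overline{\phi}$ is convex, $(x,y)\mapsto(\overline{\phi}_x,\overline{\phi}_y)$ is a diffeomorphism onto the Legendre transform domain of $\overline{\phi}$, so $\psi$ is indeed a graph over that domain.

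I would then verify the conformal relation $g = \frac{\mathrm{e}^{2\varphi(\overline{u})}}{\overline{W}^2}\overline{g}$ directly from \eqref{lintegra}: for instance $g_{xx}=\overline{\phi}_{xx}^2+\overline{\phi}_{xy}^2+\mathrm{e}^{2\varphi}\overline{u}_x^2$ reduces, via the identity $(1-\overline{u}_x^2)^2+\overline{u}_x^2\overline{u}_y^2+\overline{W}^2\overline{u}_x^2 = 1-\overline{u}_x^2$, to $\frac{\mathrm{e}^{2\varphi}}{\overline{W}^2}(1-\overline{u}_x^2)=\frac{\mathrm{e}^{2\varphi}}{\overline{W}^2}\,\overline{g}_{xx}$; the analogous computations give $g_{xy}=\frac{\mathrm{e}^{2\varphi}}{\overline{W}^2}\overline{g}_{xy}$ and $g_{yy}=\frac{\mathrm{e}^{2\varphi}}{\overline{W}^2}\overline{g}_{yy}$, where $\overline{g}_{xx}=1-\overline{u}_x^2$, $\overline{g}_{xy}=-\overline{u}_x\overline{u}_y$, $\overline{g}_{yy}=1-\overline{u}_y^2$ is the metric of the spacelike graph of $\overline{u}$. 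This is the first claimed identity.

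For minimality, I would use that in dimension two conformal metrics have proportional Laplace--Beltrami operators, $\Delta_g = \frac{\overline{W}^2}{\mathrm{e}^{2\varphi}}\Delta_{\overline{g}}$, so that $\mathbf{H} = \Delta_g\psi = \frac{\overline{W}^2}{\mathrm{e}^{2\varphi}}\Delta_{\overline{g}}\psi$. The Lorentzian analogues of \eqref{l2}--\eqref{l4}, obtained by differentiating \eqref{lintegra} and feeding the result into \eqref{llaplacian} (the component $\Delta_{\overline{g}}\vartheta(\overline{u})=-\mathrm{e}^{\varphi}\dot{\varphi}$ already follows from \eqref{Lfe} alone), show that $\Delta_{\overline{g}}\psi = -\frac{\mathrm{e}^{\varphi(\overline{u})}\dot{\varphi}(\overline{u})}{\overline{W}}\,N$. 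Therefore $\mathbf{H} = -\frac{\overline{W}\dot{\varphi}(\overline{u})}{\mathrm{e}^{\varphi(\overline{u})}}\,N$, and since the weight $\rho = -\varphi\circ\vartheta^{-1}$ evaluated along the third coordinate $z=\vartheta(\overline{u})$ satisfies $\dot{\rho} = -\dot{\varphi}(\overline{u})/\mathrm{e}^{\varphi(\overline{u})}$, this reads $\mathbf{H} = \dot{\rho}\,\langle N,\vec{e}_3\rangle N = (\overline{\nabla}\rho)^\perp$, i.e. $\psi$ is $[-\varphi\circ\vartheta^{-1},\vec{e}_3]$-minimal by \eqref{vforce}. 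Alternatively, it suffices to match third components in $\mathbf{H}=\lambda N$, which needs only $\Delta_{\overline{g}}\vartheta(\overline{u})$ and bypasses the third derivatives of $\overline{\phi}$.

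Finally, for the curvatures I would compute the second fundamental form of $\psi$ from \eqref{lintegra}, e.g. $\langle\psi_x,N_x\rangle = -\frac{\mathrm{e}^{\varphi}}{\overline{W}}\overline{u}_{xx}$, and compare with that of $\widetilde{\psi}=(x,y,\overline{u})$, whose timelike normal is $\frac{1}{\overline{W}}(\overline{u}_x,\overline{u}_y,1)$ and whose coefficients are multiples of $\overline{u}_{xx},\overline{u}_{xy},\overline{u}_{yy}$. This yields, exactly dual to \eqref{sff}, the shape-operator relation $A + \mathrm{e}^{-\varphi(\overline{u})}\overline{W}^2\,\overline{A} = 0$. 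Taking traces with respect to $g$ and using the conformal factor gives $H + \mathrm{e}^{-\varphi(\overline{u})}\overline{W}^2\overline{H} = 0$, which is \eqref{Lhh}. The Gauss curvatures require one extra point of care: $\psi$ lives in $\R^3$, so $K=\det A$, whereas the spacelike $\widetilde{\psi}$ lives in $\L^3$ with timelike normal, so its Gauss equation reads $\overline{K} = -\det\overline{A}$; combining this sign with $\det A = \mathrm{e}^{-2\varphi}\overline{W}^4\det\overline{A}$ produces the $+$ in $K + \mathrm{e}^{-2\varphi(\overline{u})}\overline{W}^4\overline{K} = 0$, establishing \eqref{Lkk}. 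This sign flip, together with fixing the orientation of $N$ so that $\langle N,\vec{e}_3\rangle=\overline{W}>0$ reproduces the correct weight $-\varphi\circ\vartheta^{-1}$, is the only genuinely delicate step; everything else is a direct transcription of Theorem~\ref{th1}.
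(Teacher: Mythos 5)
Your proposal is correct and follows essentially the route the paper intends: the paper gives no separate proof of Theorem~\ref{th2}, stating only that one argues as in Theorem~\ref{th1}, and your write-up is precisely that transcription (normal vector and conformality of $g$ and $\overline{g}$ from \eqref{lintegra}, the Laplacian computation via \eqref{Lfe} for the weighted minimality, and the second fundamental form comparison giving the shape-operator identity dual to \eqref{sff}). Your explicit attention to the sign $\overline{K}=-\det\overline{A}$ coming from the timelike normal in $\L^3$ is a detail the paper leaves implicit, but it is the same argument.
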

\begin{remark}
The correspondence \eqref{correspondencia2} also writes as follows
\begin{equation} \label{lglobalc}
\psi = \int  \mathrm{e}^{\varphi(\ll\widetilde{\psi},\vec{e}_3\gg)} \left(\vec{e}_3\wedge_{\L^3} (d\widetilde{\psi}\wedge_{\L^3}  \overline{N}) - \ll d\widetilde{\psi},\vec{e}_3\gg\vec{e}_3\right),
\end{equation}
where $\wedge_{\L^3}$ denotes the cross product in $\L^3$ and $\overline{N}$ is the Gauss map of $\widetilde{\psi}$. The singular points of 
  $\psi$ hold  where the angle  function $\ll \vec{e}_3,\overline{N}\gg$ vanishes. \end{remark}
  \begin{definition}{\rm If $\psi$ and $\widetilde{\psi}$ are related as in either \eqref{globalc} or \eqref{lglobalc} we say that } $(\psi,\widetilde{\psi})$ is a Calabi-pair.
  \end{definition}
\begin{corollary} Let $(\psi,\widetilde{\psi})$ be a Calabi-pair.
\begin{itemize}
\item If  $\psi$ is a  translating solitons in $\R^3$ then $\widetilde{\psi}$ is a singular $(-1)$-maximal spacelike surface in $\L^3$, see Figure \ref{cp1}.
\item If  $\widetilde{\psi}$   is a  translating solitons in $\L^3$ then $\psi$ is a singular $(-1)$-minimal  surface in $\R^3$, see Figure \ref{cp2}.
\end{itemize}
\end{corollary}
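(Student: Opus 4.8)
The plan is to apply Theorems \ref{th1} and \ref{th2} directly and simply track how the weight function transforms under the Calabi correspondence. The whole point is that the defining datum of a translating soliton is a \emph{linear} weight, while the defining datum of a singular $(-1)$-maximal (resp.\ minimal) surface is the logarithmic weight $-\log$, and these two are interchanged by the map $\varphi \mapsto -\varphi\circ\vartheta^{-1}$ that governs the correspondence.

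First I would fix the normalisations used by the paper. A translating soliton corresponds to $G(u)\equiv 1$ in \eqref{leeq}, i.e.\ to $\dot\varphi\equiv\text{const}$, so up to an additive constant the weight is linear, $\varphi(t)=c\,t+d$ with $c\neq 0$ (the canonical choice being $\varphi(t)=t$); the hypothesis $c\neq 0$ is exactly what makes $\vartheta$ invertible, the case $c=0$ being the plain minimal/maximal situation. A singular $\alpha$-maximal (resp.\ $\alpha$-minimal) surface is, by definition, the one whose weight is $p\mapsto \alpha\log\langle p,\vec e_3\rangle$, equivalently $\dot\varphi(t)=\alpha/t$.

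For the first item I would then compute the transformed weight. Since $\dot\vartheta=\mathrm{e}^\varphi$ and $\varphi(t)=ct+d$, a primitive is $\vartheta(t)=\tfrac1c\,\mathrm{e}^{ct+d}$, whose inverse is $\vartheta^{-1}(s)=\tfrac1c\bigl(\log(cs)-d\bigr)$. A direct substitution gives
\[
\bigl(-\varphi\circ\vartheta^{-1}\bigr)(s) = -\bigl(c\,\vartheta^{-1}(s)+d\bigr) = -\log(cs) = -\log s-\log c .
\]
Because additive constants in the weight do not affect the equation \eqref{Lfe} (only $\dot\varphi$ enters, cf.\ \eqref{lintegra}), the new weight equals $-\log s$ up to a constant, i.e.\ $\alpha=-1$. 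By Theorem \ref{th1}, the Calabi partner $\widetilde\psi$ is $[-\varphi\circ\vartheta^{-1},\vec e_3]$-maximal, hence a singular $(-1)$-maximal spacelike surface in $\L^3$. I would stress that the slope $c$ cancels, which is precisely why the resulting exponent is exactly $-1$, independently of the particular translating soliton chosen.

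Finally, the second item is the mirror statement and I would obtain it identically, now invoking Theorem \ref{th2}: starting from a $[\varphi,\vec e_3]$-maximal graph with $\varphi$ linear, the very same computation of $\vartheta$ and $-\varphi\circ\vartheta^{-1}$ produces the weight $-\log$, so $\psi$ is singular $(-1)$-minimal in $\R^3$. I do not expect a genuine obstacle here, since the analytic content is entirely carried by Theorems \ref{th1} and \ref{th2}; the only points requiring care are matching the paper's sign and normalisation conventions for \emph{translating soliton} and \emph{singular $\alpha$-maximal}, and observing that both the additive constant $d$ and the slope $c$ are immaterial, so that the correspondence pins the exponent down to the single value $\alpha=-1$.
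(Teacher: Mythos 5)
Your proof is correct and takes the same route the paper intends: the corollary is stated there without proof, being an immediate consequence of Theorems \ref{th1} and \ref{th2} together with precisely your computation that the map $\varphi\mapsto-\varphi\circ\vartheta^{-1}$ turns a linear weight into $-\log$ up to an irrelevant additive constant, pinning down $\alpha=-1$ independently of the slope. The only (immaterial) slip is your aside that $c=0$ would make $\vartheta$ non-invertible; in fact $\vartheta(t)=\mathrm{e}^{d}t$ is still invertible in that case, and one simply recovers Calabi's original minimal/maximal correspondence because the transformed weight is then constant.
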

\begin{figure}[htb]
\begin{center}
\includegraphics[width=0.35\linewidth]{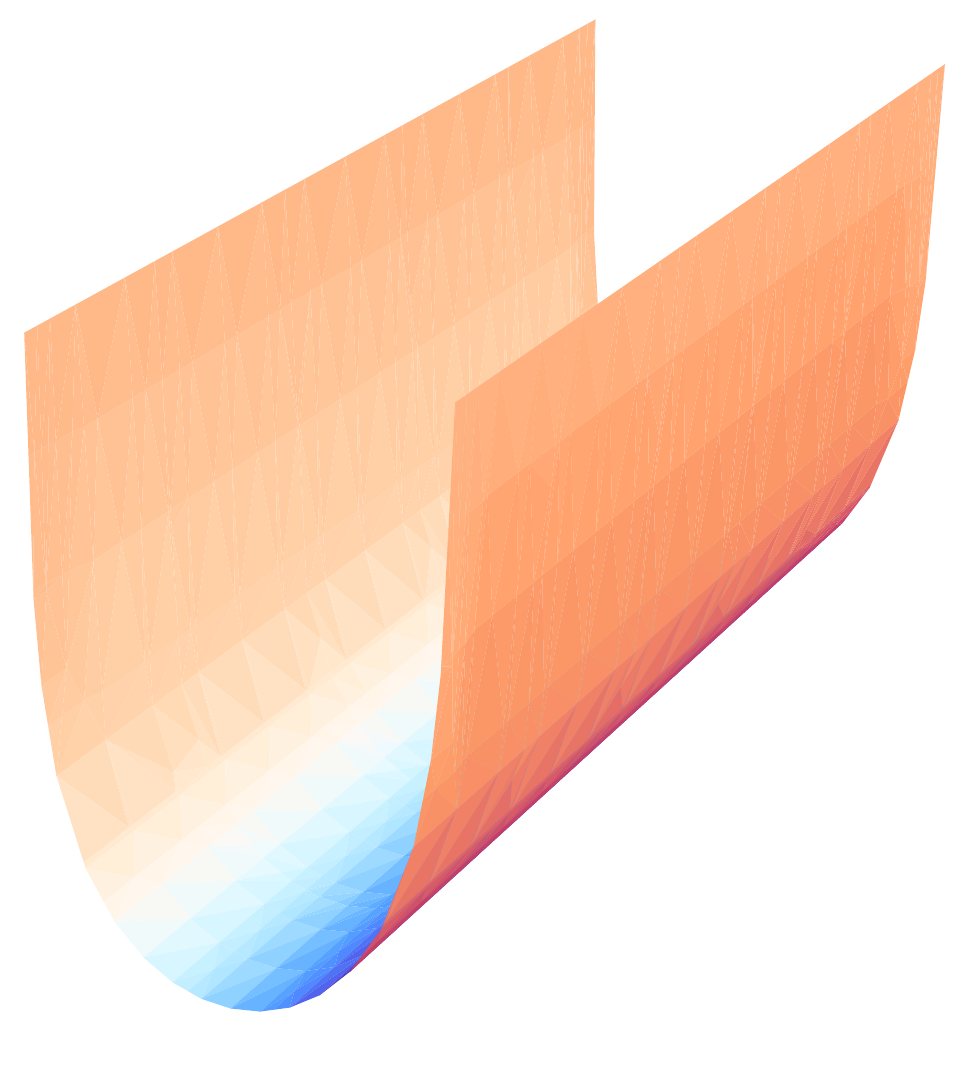}
\includegraphics[width=0.6\linewidth]{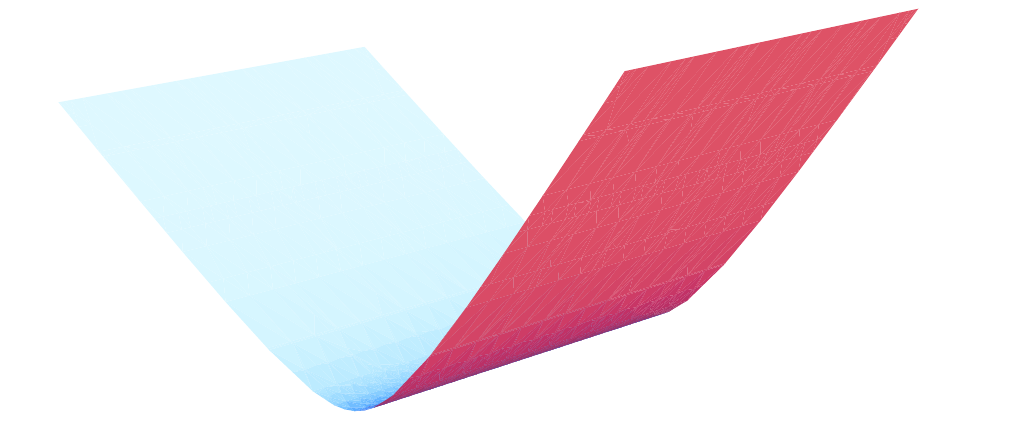}
\end{center}
\caption{Soliton  in $\R^3$ and its corresponding singular (-1)-maximal surface in $\L^3$.}\label{cp1}
\end{figure}
%\begin{corollary} There is a one to one correspondence between singular $(-1)$-minimal surfaces in $\R^3$  and translating solitons in $\L^3$.
%\end{corollary}
\begin{figure}[htb]
\begin{center}
\includegraphics[width=0.45\linewidth]{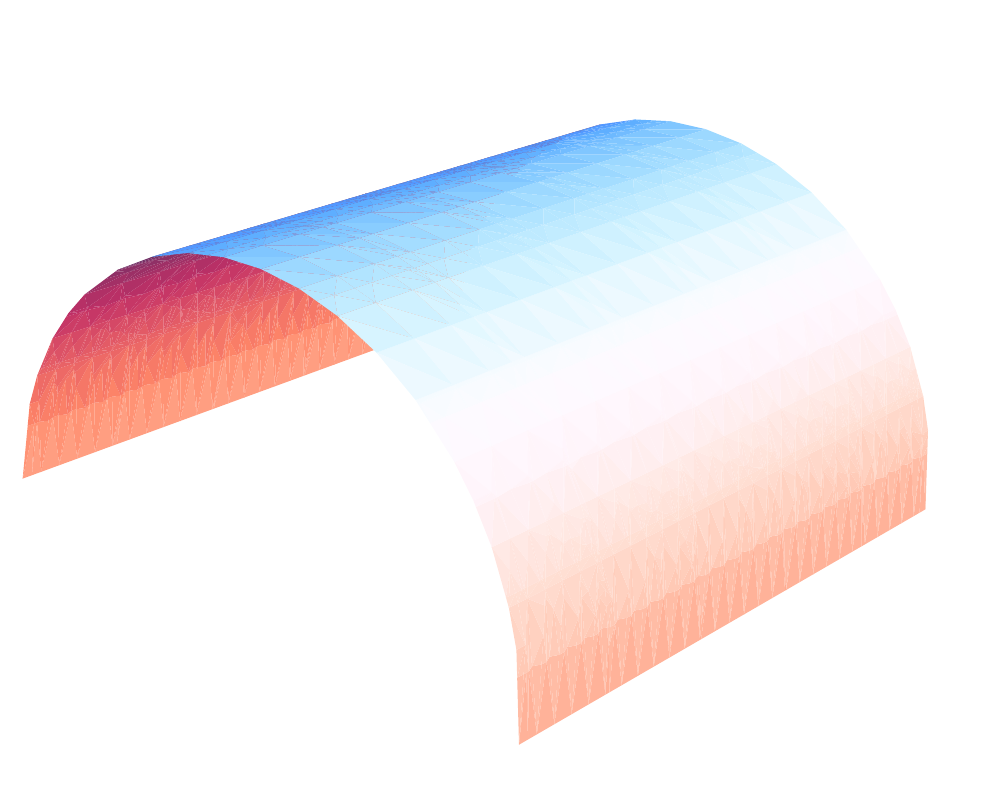}
\includegraphics[width=0.45\linewidth]{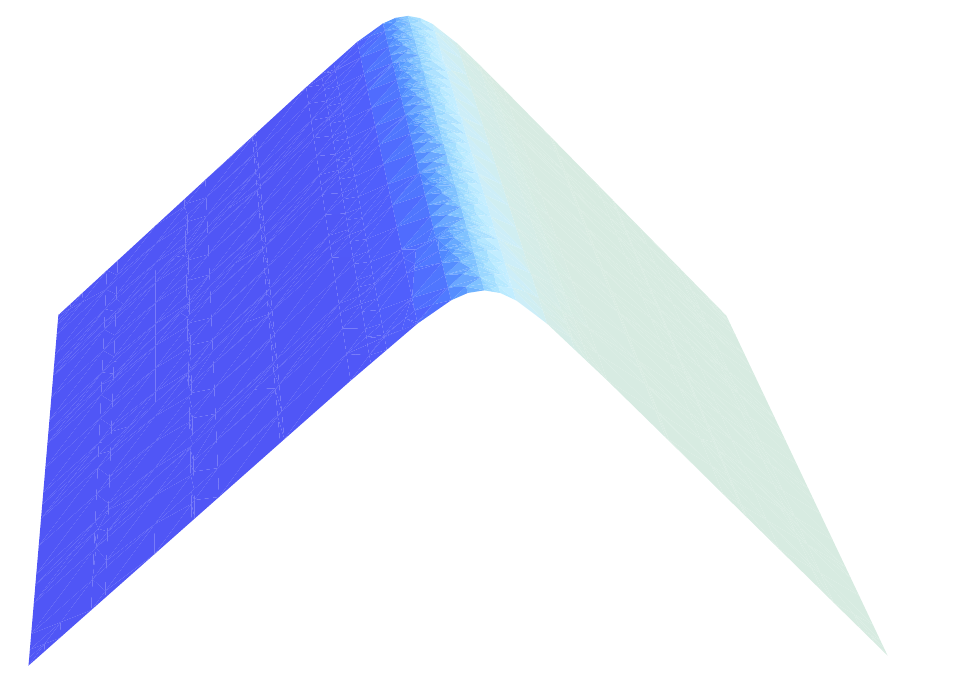}
\end{center}
\caption{Cupola in $\R^3$ and  its corresponding translating soliton in $\L^3$.}\label{cp2}
\end{figure}
\begin{remark}
{\rm  Observe that if one of the surfaces in a Calabi-pair is convex (respectively, of vanishing  Gauss curvature), then both are convex (respectively, of vanishing  Gauss curvature).}
\end{remark}
\section{Applications}
In this section we show some applications to the description   and study of new examples.
\subsection{Radially symmetric  is a preserved condition}
The elliptic equations \eqref{fe} and \eqref{Lfe} have radially symmetric solutions $u (r)$ and $\overline{u}(r)$,  $r=\sqrt{x^2+y^2}$, respectively, if and only if the following ODE equations are satisfied,
\begin{align}
&\frac{u''}{1+u\,'^2} = \dot{\varphi}(u) - \frac{u\,'}{r}, \quad %\text{ for $r\in ]a,b[ \subseteq \R^+$},  
\label{rfe} \\
&\frac{\overline{u}''}{1- \overline{u}\,'^2} = -\dot{\varphi}(\overline{u}) - \frac{\overline{u}\,'}{r}. \quad %\text{ for $\overline{r}\in ]a,b[ \subseteq \R^+$}.  
\label{Lrfe}
\end{align}
From these equations we get,
 \begin{align}
&\left(  \frac{\mathrm{e}^{\varphi(u)} r}{\sqrt{1+u\,'^2}}\right) ' =\mathrm{e}^{\varphi(u)}\sqrt{1+u\,'^2},   \label{fe1} \\
&\left(  \frac{\mathrm{e}^{\varphi(\overline{u})} r}{\sqrt{1-\overline{u}\, '^2}}\right) ' =\mathrm{e}^{\varphi(\overline{u})}\sqrt{1-\overline{u}\, '^2}.\label{fe2}
\end{align}
Hence, if we consider $\phi$ and $\overline{\phi}$ the two radial functions 
\begin{align}
& \phi  =  \int\mathrm{e}^{\varphi(u)} r \cos(z)dr,  \qquad  \overline{\phi}  =  \int\mathrm{e}^{\varphi(\overline{u})} r \cosh(z) dr\label{ephi1}
\end{align}
where
\begin{align*}& u\,'(r)=\tan{z}, \qquad  \overline{u}\, '(r)=\tanh{z},\end{align*}
we have,
\begin{align}\phi'' = \frac{ \mathrm{e}^{\varphi(u)}}{\cos(z)}, \qquad \overline{\phi}\ '' = \frac{ \mathrm{e}^{\varphi(\overline{u})}}{\cosh(z)}.\label{ephi2}
\end{align}
From \eqref{ephi1},  \eqref{ephi2} and by a straightforward computation, we can prove that  $\phi$ and $\overline{\phi}$  are, respectively, solutions of  the differential systems \eqref{integrabilidad} and \eqref{lintegra}. 
Thus, from Theorem \ref{th1} and Theorem \ref{th2} we have,
\begin{proposition}\label{p1}\noindent
\begin{itemize}
\item Let  $\psi := ( r \cos{t}, r \sin{t}, u(r))$ be a revolution $[\varphi,\vec{e}_3]$-minimal surface in $\R^3$. Then, the corresponding $[-\varphi\circ \vartheta^{-1} ,\vec{e_3}]$-maximal  spacelike surface in \eqref{correspondencia1} is the rotational surface of elliptic type given by,
$$  \widetilde{\psi} := \left(\dot{\vartheta}(u)\, r \cos(z)\cos{t}, \dot{\vartheta}(u)\, r \cos(z) \sin{t}, \vartheta(u)\right), $$ 
\item Let  $\widetilde{\psi} := ( r \cos{t}, r \sin{t}, \overline{u}(r))$ be a revolution $[ \overline{\varphi},\vec{e}_3]$-maximal spacelike surface of elliptic type in $\L^3$. Then, the corresponding $[- \varphi\circ \vartheta^{-1} ,\vec{e_3}]$-minimal   surface in \eqref{correspondencia2} is the rotational surface given by,
$$ \psi := \left( \dot{\vartheta}(\overline{u})\, r \cosh(z) \cos{t},  \dot{\vartheta}(\overline{u})\,  r \cosh(z) \sin{t}, \vartheta( \overline{u})\right),$$ 
where
\begin{align*}& u\,'(r)=\tan{z}, \qquad  \overline{u}\, '(r)=\tanh{z},\end{align*}
\end{itemize}
\end{proposition}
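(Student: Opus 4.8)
The plan is to obtain the proposition directly from the two correspondence theorems, specialized to a radial profile. The discussion preceding the statement already supplies the two radial potentials $\phi$ and $\overline{\phi}$ of \eqref{ephi1} and asserts, through \eqref{fe1}--\eqref{fe2} and \eqref{ephi2}, that they solve the integrability systems \eqref{integrabilidad} and \eqref{lintegra}. Granting this, Theorem \ref{th1} applies to a revolution $[\varphi,\vec{e}_3]$-minimal graph $\psi=(r\cos t,r\sin t,u(r))$ and produces $\widetilde{\psi}=(\phi_x,\phi_y,\vartheta(u))$, so the entire content of the first item reduces to rewriting $\phi_x$ and $\phi_y$ in polar form. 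Symmetrically, the second item is obtained by feeding the radial $\overline{\phi}$ into Theorem \ref{th2}.

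First I would differentiate the radial function $\phi=\phi(r)$ with respect to the Cartesian variables. Since $r=\sqrt{x^2+y^2}$ gives $r_x=x/r=\cos t$ and $r_y=y/r=\sin t$, the chain rule yields $\phi_x=\phi'(r)\cos t$ and $\phi_y=\phi'(r)\sin t$. The radial derivative is read off from \eqref{ephi1}, namely $\phi'(r)=\mathrm{e}^{\varphi(u)}\,r\cos z=\dot\vartheta(u)\,r\cos z$, using $\dot\vartheta=\mathrm{e}^{\varphi}$. Substituting, and recalling that the third component of $\widetilde{\psi}$ is $\vartheta(u)$, gives exactly $\widetilde{\psi}=(\dot\vartheta(u)\,r\cos z\cos t,\ \dot\vartheta(u)\,r\cos z\sin t,\ \vartheta(u))$, a surface of revolution of elliptic type, the rotation being the Euclidean one in the spacelike $xy$-plane about the timelike $\vec{e}_3$-axis. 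The second item is the same computation run on $\overline{\phi}$, the only change being that $\overline{u}'=\tanh z$ replaces $\cos z$ by $\cosh z$ in \eqref{ephi1}.

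Beyond this polar bookkeeping there is essentially no obstacle, because the one genuinely computational step---that $\phi$ and $\overline{\phi}$ satisfy \eqref{integrabilidad} and \eqref{lintegra}---has been placed in the paragraph before the statement. Were one to check it from scratch, this is the step I would verify first: for the radial $\phi$, differentiating twice gives $\phi_{xx}=\phi''\,x^2/r^2+\phi'\,y^2/r^3$, and inserting $\phi''=\mathrm{e}^{\varphi(u)}W$ together with $\phi'=\mathrm{e}^{\varphi(u)}r/W$ (from \eqref{ephi2} and $\cos z=1/W$) reduces the claim $\phi_{xx}=(1+u_x^2)\mathrm{e}^{\varphi(u)}/W$ to the algebraic identity $W^2x^2+y^2=r^2(1+u_x^2)$, valid because $W^2=1+u'^2$; the $\phi_{xy}$ and $\phi_{yy}$ entries, and their Lorentzian counterparts for $\overline{\phi}$, are identical in form. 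Thus the proposition follows as an immediate corollary of Theorems \ref{th1} and \ref{th2}.
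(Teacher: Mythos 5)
Your proposal is correct and follows the same route as the paper: the authors likewise build the radial potentials $\phi$ and $\overline{\phi}$ of \eqref{ephi1}, note via \eqref{fe1}--\eqref{fe2} and \eqref{ephi2} that they solve \eqref{integrabilidad} and \eqref{lintegra}, and then read the proposition off Theorems \ref{th1} and \ref{th2}. Your explicit polar chain-rule computation $\phi_x=\phi'(r)\cos t=\dot\vartheta(u)\,r\cos z\cos t$ and the verification of $\phi_{xx}=(1+u_x^2)\mathrm{e}^{\varphi(u)}/W$ simply fill in the ``straightforward computation'' the paper leaves implicit.
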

\subsection{Translating solitons and singular $\alpha$-maximal surfaces with  $\alpha>1$.} 
The behaviour of a rotational singular $\beta$-minimal surface $\psi$ in $\R^3$, $\beta\leq -1$, was described in \cite[Theorem 8]{R1}. In fact, if 
$$\gamma(s) = (x(s),0,u(s)), \quad s\in [0,L],$$
is the arc-length parametrized generating curve of $\psi$, there are only two possibilities:

\

\noindent {\sc Case I: $\gamma$ intersects the rotation axis}

\noindent In this case, although the ordinary differential equation associated to  problem has a singularity at $x=0$, see \eqref{rfe}, the curve  $\gamma$ is the graph of a symmetric and concave function $u(x)$, $u\in {\cal C}^2(]-R,R[)$ and  meets orthogonally the rotation axis  and  to the  $x$-axis, see Figure \ref{f3}. In particular,
\begin{figure}[htb] 
\begin{center}
\includegraphics[width=0.55\linewidth]{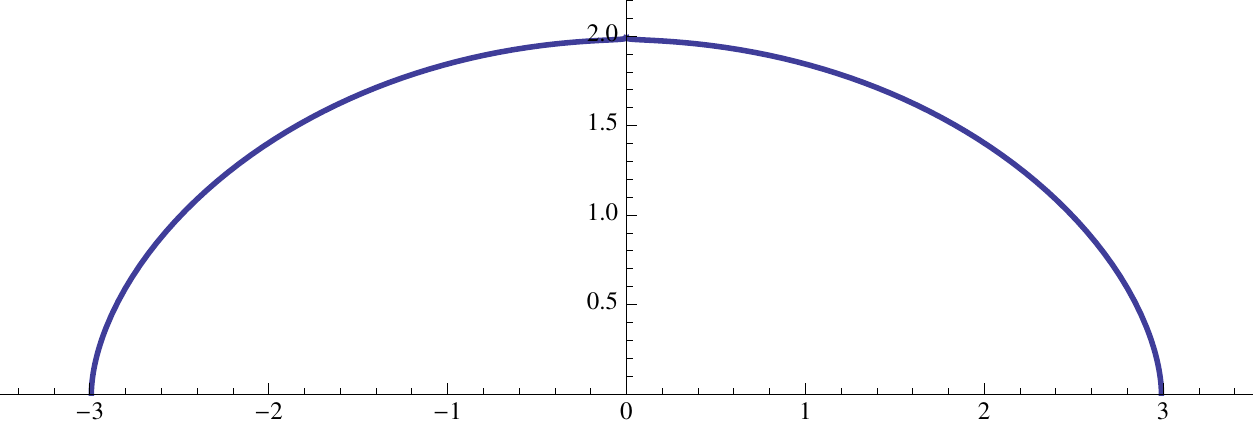}
\includegraphics[width=0.4\linewidth]{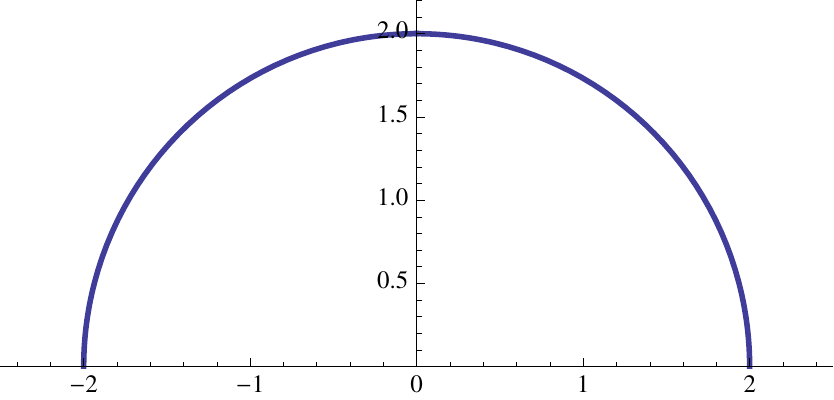}
\end{center}
\caption{generating curve $\gamma$ for $\beta=-1$ and for $\beta=-2$.} \label{f3}
\end{figure}
\begin{description}
\item[(a)] $u(0)=u(L)=0$, $x(0)=-R$, $x(L)=R$, $x'(0)=x'(L)=0$.
\item[(b)]  $x(L/2)=0$, $u(L/2)>0$, $u'(L/2)=0$.
\end{description}
\begin{theorem}[Existence of spacelike singular $\alpha$-maximal bowls of elliptic type]\label{texistencia1}\noindent
\begin{itemize}
\item[(i)] There exists a rotationally symmetric,  entire, smooth, strictly convex spacelike translating soliton (unique up to translation) and of linear growth, Figure \ref{f4} left (see \cite{QD,JIAN} for a proof based on ODE theory). 
 \item[(ii)] For any  $\alpha>1$,  there is a  rotationally symmetric,  entire, smooth, strictly convex spacelike singular  $\alpha$-maximal graph (unique up to  homothety) and of linear growth, Figure \ref{f4} right.
 \end{itemize}
These examples will be called {\sl singular $\alpha$-maximal bowls} of elliptic type.
\end{theorem}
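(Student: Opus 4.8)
The plan is to realize both families as rotationally symmetric solutions of the ODE governing $[\varphi,\vec e_3]$-maximal graphs and to run a shooting/first-integral analysis from the rotation axis. For part (i) the potential is linear ($\dot\varphi\equiv\text{const}$) and the assertion is precisely the ODE existence statement established in \cite{QD,JIAN}, so I would simply invoke those references. The new content is part (ii). Writing $\varphi(\overline u)=\alpha\log\overline u$, so that $\dot\varphi(\overline u)=\alpha/\overline u$, equation \eqref{Lrfe} becomes the singular ODE $\overline u''/(1-\overline u'^2)=-\alpha/\overline u-\overline u'/r$. I would first record that the singular $\alpha$-maximal equation \eqref{Lfe} is invariant under the homothety $(x,y,z)\mapsto\lambda(x,y,z)$, since this changes the potential only by the additive constant $\alpha\log\lambda$, which does not enter \eqref{Lfe}; this scaling is exactly what will later furnish uniqueness ``up to homothety'' and lets me normalize $\overline u(0)=1$.

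First I would set up the initial value problem on the axis, $\overline u(0)=1$ and $\overline u'(0)=0$, the latter forced by smoothness of a rotationally symmetric graph. The term $\overline u'/r$ is singular at $r=0$, so local solvability is not immediate. I would circumvent this by passing to the flux form \eqref{fe2}, namely $\big(\mathrm{e}^{\varphi(\overline u)}\,r/\sqrt{1-\overline u'^2}\big)'=\mathrm{e}^{\varphi(\overline u)}\sqrt{1-\overline u'^2}$, which is regular at the origin, rewriting it as an integral equation and applying a contraction argument to produce a unique $\mathcal C^2$ solution on a small interval $[0,\delta)$. Evaluating \eqref{Lrfe} at $r=0$ fixes the value of $\overline u''(0)$ and hence the sign of the curvature of the central cap, giving strict convexity near the axis as a starting point.

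The heart of the matter, and the step I expect to be the main obstacle, is the global analysis. Here I would exploit the monotone first integral $\Phi(r):=\mathrm{e}^{\varphi(\overline u)}\,r/\sqrt{1-\overline u'^2}$ supplied by \eqref{fe2}: since $\Phi'=\mathrm{e}^{\varphi(\overline u)}\sqrt{1-\overline u'^2}\ge 0$ and $\Phi(0)=0$, comparing $\Phi(r)$ with $\int_0^r \mathrm{e}^{\varphi(\overline u)}\sqrt{1-\overline u'^2}\,ds$ yields two-sided control of $\sqrt{1-\overline u'^2}$ in terms of $\overline u$. From this I would extract, on any interval of existence: (a) a definite sign for $\overline u''$ together with a fixed sign for $\overline u'/r$, so that both principal curvatures have the same sign and the graph is \emph{strictly} convex; (b) an a priori bound forcing $|\overline u'|<1$ on every finite interval, so the graph never leaves the spacelike regime in finite $r$; and (c) the absence of finite-$r$ blow-up, whence the maximal solution is entire, $r\in[0,\infty)$. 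The delicate point is the sharp behaviour as $r\to\infty$: I would feed the flux identity back into \eqref{Lrfe}, rule out that $\overline u'$ can stabilize at an interior value of $(-1,1)$ (the $\overline u'/r$ term prevents this), and conclude $|\overline u'(r)|\to 1$ with $\overline u$ of linear growth, i.e. the bowl is asymptotically lightlike. Securing this asymptotic \emph{uniformly} while simultaneously keeping the solution spacelike for all finite $r$ is the technical crux.

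Finally, uniqueness up to homothety would follow from uniqueness for the regularized initial value problem combined with the scaling symmetry recorded above, and smoothness is automatic from ODE regularity off the axis together with the removable-singularity analysis at the axis. As a consistency check, and to obtain the picture in the statement, I would observe via Theorem~\ref{th1} that each such entire Lorentzian bowl is the Calabi-pair of a singular minimal graph in $\R^3$ defined over a bounded disk; this matches the accompanying figures and explains geometrically why entireness is a purely Lorentzian feature here, the bounded Euclidean cupola corresponding to the infinite, asymptotically lightlike $\L^3$ bowl.
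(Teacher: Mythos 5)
Your route is genuinely different from the paper's. The paper does not analyze the Lorentzian ODE at all: it takes the rotational singular $\beta$-minimal surfaces in $\R^3$, $\beta\leq -1$, whose generating curves are already classified in \cite[Theorem 8]{R1} (Case I, where $\gamma$ meets the axis), pushes them through the correspondence of Theorem \ref{th1}/Proposition \ref{p1}, and reads off entireness, concavity and linear growth of the new generating curve $\widetilde{\gamma}=(\lambda,0,\vartheta(u))$ directly from the explicit formulas $\lambda=u^\beta x\cos z$, $\dot\vartheta=u^\beta$ and the boundary behaviour (a)--(b) of $\gamma$; the convex bowl is then the reflection in $z=0$, and uniqueness comes from the tangency principle plus the homothety invariance. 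Your self-contained shooting/first-integral analysis is a legitimate alternative (it is essentially how \cite{QD,JIAN} treat case (i)), and it would even prove the Corollary following the theorem directly; but it forgoes the point of the section, which is that the hard ODE work can be outsourced to the Euclidean classification and transferred.

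There are, however, concrete gaps. First, a sign problem: with $\varphi(\overline u)=\alpha\log\overline u$ and $\alpha>1$, equation \eqref{Lrfe} as you wrote it gives $\overline u''(0)=-\alpha/(2\overline u(0))<0$ at the axis, so your IVP produces a \emph{concave, decreasing} cap running into $\overline u=0$, not the entire convex bowl; the convex bowl solves $\overline u''/(1-\overline u'^2)=+\alpha/\overline u-\overline u'/r$ (the paper's Corollary), i.e.\ the equation for the reflected surface, and until this is fixed your step (a) (``a definite sign for $\overline u''$'') lands on the wrong sign. Second, the two claims that actually constitute the theorem --- that $\overline u''>0$ persists for all $r$ (one must rule out a first zero of $\overline u''$, e.g.\ by evaluating $\overline u'''$ there) and that $\overline u'\to 1$ rather than stabilizing below $1$ --- are only asserted to follow from the flux identity \eqref{fe2}; you yourself label them ``the technical crux'' without carrying them out, so the proposal proves existence near the axis and little more. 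Third, uniqueness via IVP uniqueness plus scaling only covers rotationally symmetric competitors with prescribed axis value; the paper's tangency-principle argument is what justifies ``unique up to homothety'' as stated.
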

\begin{figure}[htb] 
\begin{center}
\includegraphics[width=0.4\linewidth]{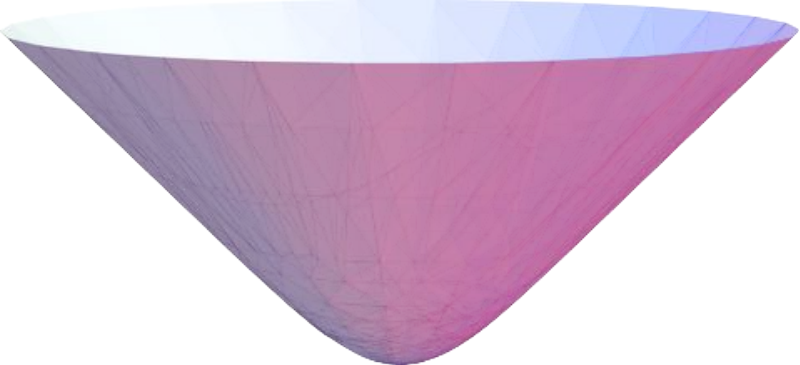}\ \ \ 
\includegraphics[width=0.4\linewidth]{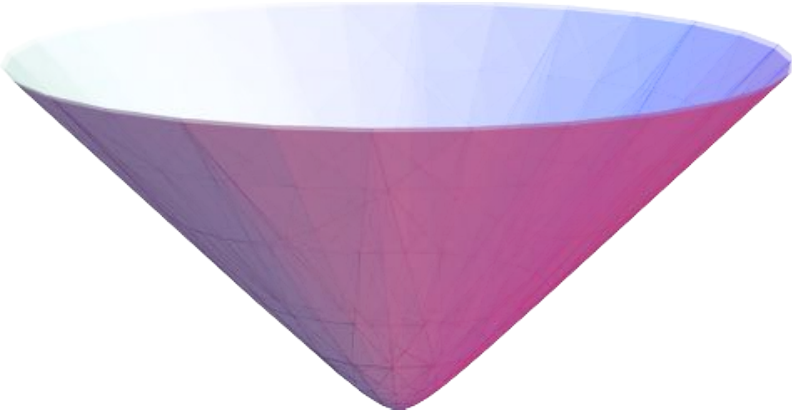}
\end{center}
\caption{Translating soliton and singular $2$-maximal bowl of elliptic type in $\L^3$} \label{f4}
\end{figure}
\begin{proof}
As $\gamma$ is the generating curve of $\psi$, from \eqref{rfe}, \eqref{ephi1} and \eqref{ephi2}, we have
\begin{align}
& x'(s)=\cos{z(s)}\quad  u'(s)=\sin{z(s)}, \label{fgeq1}\\
&  z'(s)= \frac{\beta \cos{z(s)}}{u(s)} - \frac{\sin{z(s)}}{x(s)},\label{fgeq2}
\end{align} 
From  Proposition \ref{p1}, the curve $\widetilde{\gamma}= (\lambda, 0,\vartheta (u) )$ with 
\begin{align}
& \lambda = u^\beta x \cos{z}, \quad \frac{d\vartheta}{du}=\dot{\vartheta}(u) = u^\beta,\label{curve}
\end{align}
is the generating curve of either a revolution spacelike singular $\frac{\beta}{\beta+1}$-maximal surface  of elliptic type with $\beta<-1$  or a translating soliton  with $\beta=-1$ in $\L^3$, $\widetilde{\psi}$ given by
$$ \widetilde{\psi}:=(\lambda \cos (t), \lambda \sin (t), \vartheta(u)).$$
From (a), (b),  \eqref{fe1}, \eqref{fgeq1},  \eqref{fgeq2} and \eqref{curve}, we have,
\begin{align*}
& \lim_{s\rightarrow 0,L}\frac{d\lambda}{ds} = \lim_{s\rightarrow 0,L}u^\beta =  \infty,\qquad   \lim_{s\rightarrow 0,L}\frac{d\vartheta}{ds} = \lim_{s\rightarrow 0,L}u^\beta \sin{z} =  -\infty 
\end{align*}
Hence, $\lambda(s)$ increases in $]0,L[$ from $-\infty$ to $\infty$ and  $$\lim_{s\rightarrow 0,L}\vartheta(s)=-\infty.$$  Moreover,
\begin{align*}
& \lim_{s\rightarrow L/2}\lambda(s) = 0,\quad   \lim_{s\rightarrow L/2}\vartheta = \frac{u(L/2)^{\beta+1}}{\beta+1},  \ \text{(= $\log{u(L/2)}$)} \quad \text{if $\beta<-1$ ($\beta=-1$)}.
\end{align*}
Finally, from \eqref{fgeq1},  \eqref{fgeq2} and \eqref{curve}, we  have also,
\begin{align}
 \frac{d\vartheta}{d\lambda} = \sin{z},\quad  \frac{d^2\vartheta}{d\lambda^2} = \frac{\beta \cos^2{z}}{u^{\beta +1}} - \frac{\sin{2z}}{2 x u^\beta} \leq 0.
\end{align}
So,  $\vartheta$ is  a function of $\lambda$ which satisfies:  $\vartheta\in{\cal C}^2(\R)$,  is  concave, has a maximum at the origin and has a linear growth, see Figure \ref{ff4}.
\begin{figure}[htb] 
\begin{center}
\includegraphics[width=0.4\linewidth]{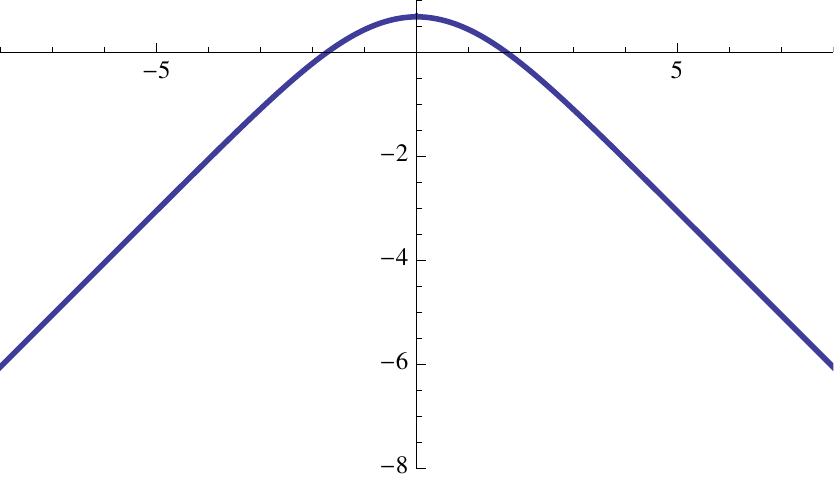} \ \ \ 
\includegraphics[width=0.44\linewidth]{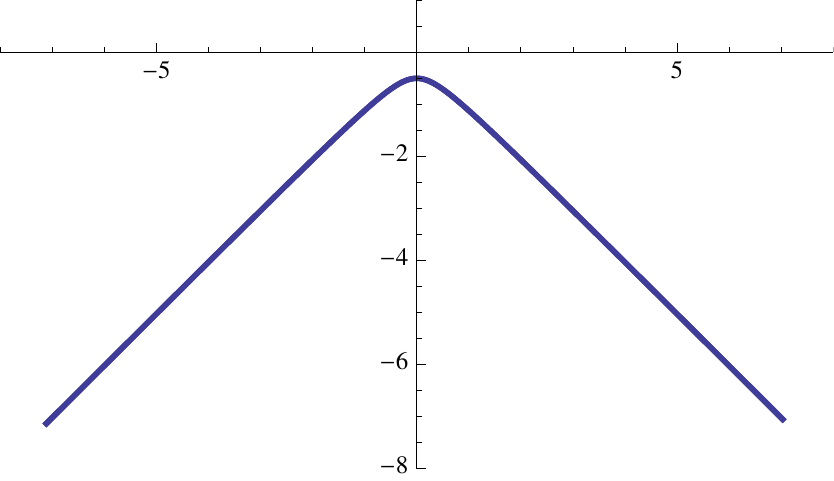}
\end{center}
\caption{generating curve $\widetilde{\gamma}$ for $\beta=-1$ and for $\beta=-2$.} \label{ff4}
\end{figure}

The convex singular $\alpha$-maximal bowl, $\alpha=\frac{\beta}{\beta +1}$, is obtained by the reflexion  in $z=0$ of $\widetilde{\psi}$.

The uniqueness  follows by using the tangency  principle for $[\varphi,\vec{e}_3]$-minimal and spacelike $[\varphi,\vec{e}_3]$-maximal vertical graphs and making into account that a vertical translation moves any translating soliton into another translating soliton and a Euclidean homothety with center at the origin moves any spacelike singular $\alpha$-maximal surface into another spacelike singular $\alpha$-maximal surface.
\end{proof} 
Consider  either $f(\overline{u})\equiv 1$ or $f( \overline{u}) = \alpha/\overline{u}$, for some $ \alpha>1$. Then the above theorem gives the following existence result:
\begin{corollary}
 For each positive real number $a$, the  initial value problem
\begin{align}
& \frac{\overline{u}''}{1- \overline{u}'^2} = f(\overline{u})- \frac{\overline{u}\,'}{r}, \qquad r\in ]0,\infty[ \label{ep1},\\
& \overline{u}(0)= a>0, \qquad \overline{u}\,'(0) =0.\label{ep2}
\end{align}
has an unique convex solution  $\overline{u}\in {\cal C}^2[0,\infty[$ such that 
$$ \lim_{r\rightarrow\infty} \overline{u}\,' = 1.$$

\end{corollary}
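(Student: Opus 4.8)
The plan is to recognize \eqref{ep1}--\eqref{ep2} as the profile problem for the revolution bowls already produced in Theorem \ref{texistencia1}, and to read off existence, uniqueness and the asymptotics from there rather than integrating \eqref{ep1} from scratch. Comparing \eqref{ep1} with the radial $[\varphi,\vec{e}_3]$-maximal equation \eqref{Lrfe}, a radial $\overline{u}(r)$ solving \eqref{ep1} is exactly the generating function of a rotationally symmetric spacelike $[\varphi,\vec{e}_3]$-maximal graph with $f=-\dot{\varphi}$. Hence $f\equiv 1$ is the linear-potential case (translating soliton) and $f(\overline{u})=\alpha/\overline{u}$ is the logarithmic-potential case (singular $\alpha$-maximal), and these are precisely the two families whose strictly convex, entire, linear-growth bowls are granted by parts (i) and (ii) of Theorem \ref{texistencia1}. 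I would also record that evaluating \eqref{ep1} at $r=0$ with $\overline{u}'(0)=0$ and $\lim_{r\to 0}\overline{u}'/r=\overline{u}''(0)$ forces $\overline{u}''(0)=f(a)/2>0$, so every solution of \eqref{ep1}--\eqref{ep2} is strictly convex at the vertex.

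For existence I would take the bowl of Theorem \ref{texistencia1}, whose radial profile $\overline{u}_0\in\mathcal{C}^2[0,\infty[$ is convex, spacelike and, by the linear-growth statement, satisfies $\overline{u}_0'(0)=0$ and $\lim_{r\to\infty}\overline{u}_0'=1$. It remains only to hit the prescribed height $\overline{u}(0)=a$ without leaving the class. In the soliton case $f\equiv 1$, equation \eqref{ep1} is invariant under the vertical translation $\overline{u}\mapsto\overline{u}+c$, so I translate $\overline{u}_0$ to arrange $\overline{u}(0)=a$. In the singular case $f(\overline{u})=\alpha/\overline{u}$, equation \eqref{ep1} is invariant under the Euclidean homothety $\overline{u}(r)\mapsto\lambda\,\overline{u}(r/\lambda)$ (the scale covariance of the logarithmic potential), and choosing $\lambda=a/\overline{u}_0(0)$ gives a convex solution with the required initial value and with $\overline{u}'\to 1$ preserved. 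Since $\overline{u}$ is increasing from $a>0$, the weight $f=\alpha/\overline{u}$ stays smooth and bounded, so no interior blow-up occurs and the solution is defined on all of $[0,\infty[$.

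For uniqueness I would absorb the removable singularity of \eqref{ep1} at $r=0$: rewriting it as a first order system in $(\overline{u},\overline{u}')$ with the term $\overline{u}'/r$ continuously extended at the origin by $\overline{u}''(0)=f(a)/2$, one obtains a regular problem whose solution is locally unique. Any $\mathcal{C}^2[0,\infty[$ solution automatically keeps $1-\overline{u}'^2>0$, since otherwise the left-hand side of \eqref{ep1} would blow up at a finite $r$, contradicting convexity; hence the solution stays spacelike and convex, and unique continuation forces it to coincide with the normalized bowl on all of $[0,\infty[$. In particular the asymptotic slope $\lim_{r\to\infty}\overline{u}'=1$ is \emph{automatic}, not an extra selection hypothesis. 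Alternatively, the tangency principle for spacelike $[\varphi,\vec{e}_3]$-maximal vertical graphs invoked in the proof of Theorem \ref{texistencia1} yields uniqueness once the translation, resp. homothety, freedom is frozen by $\overline{u}(0)=a$.

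The main obstacle is the coordinate singularity at $r=0$, which blocks a direct appeal to Picard--Lindel\"of and must be handled through the regularized system, together with the need to show that convexity and the initial data \emph{force} the solution to remain spacelike for all $r$ and to approach the lightlike slope $\overline{u}'\to 1$ rather than some $\overline{u}'\to L<1$. The cleanest tool for the latter is the first integral \eqref{fe2}, which writes $\sqrt{1-\overline{u}'^2}$ as $\frac{r\,\mathrm{e}^{\varphi(\overline{u})}}{\int_0^r \mathrm{e}^{\varphi(\overline{u})}\sqrt{1-\overline{u}'^2}\,dt}$ and lets one read off that this quantity tends to $0$; combined with the uniqueness above this closes the argument.
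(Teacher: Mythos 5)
Your proposal is correct and follows essentially the paper's own route: the paper gives no separate proof of this corollary but presents it as an immediate consequence of Theorem \ref{texistencia1}, namely existence by normalizing the bowl profile through the vertical-translation (for $f\equiv 1$) or homothety (for $f=\alpha/\overline{u}$) invariance so that $\overline{u}(0)=a$, and uniqueness inherited from the tangency-principle/normalization argument there, which is exactly your main line. Your supplementary ODE details (the vertex relation $\overline{u}''(0)=f(a)/2$ and the first integral \eqref{fe2} forcing $\overline{u}\,'\to 1$) are sound; the only loosely stated point is that $1-\overline{u}\,'^2>0$ cannot fail ``because the left-hand side would blow up'' --- since $\overline{u}''$ could tend to $0$ there, the clean justification is the Gronwall estimate $|(1-\overline{u}\,')'|\le C\,(1-\overline{u}\,')$ obtained from the boundedness of $f(\overline{u})-\overline{u}\,'/r$ away from $r=0$.
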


\

\noindent {\sc Case II: $\gamma$ does not intersects the rotation axis}

\noindent In this case, $\gamma$ is embedded, has a winglike-shape and  intersects orthogonally the $x$-axis in two points, see Figure \ref{f5}, that is,
 \begin{description}
 \item[(c)] $u(0)=u(L)=0$, $x\,'(0)=x\,'(L)=0$.
 \item[(d)] There exists $s_1\in ]0,L[$, such that  $x\,'(s_1)=0$, $u(s_1)>0$ and $u$ is concave in $[s_1,L]$.
 \end{description}
 \begin{figure}[htb]
\begin{center}
\includegraphics[width=0.48\linewidth]{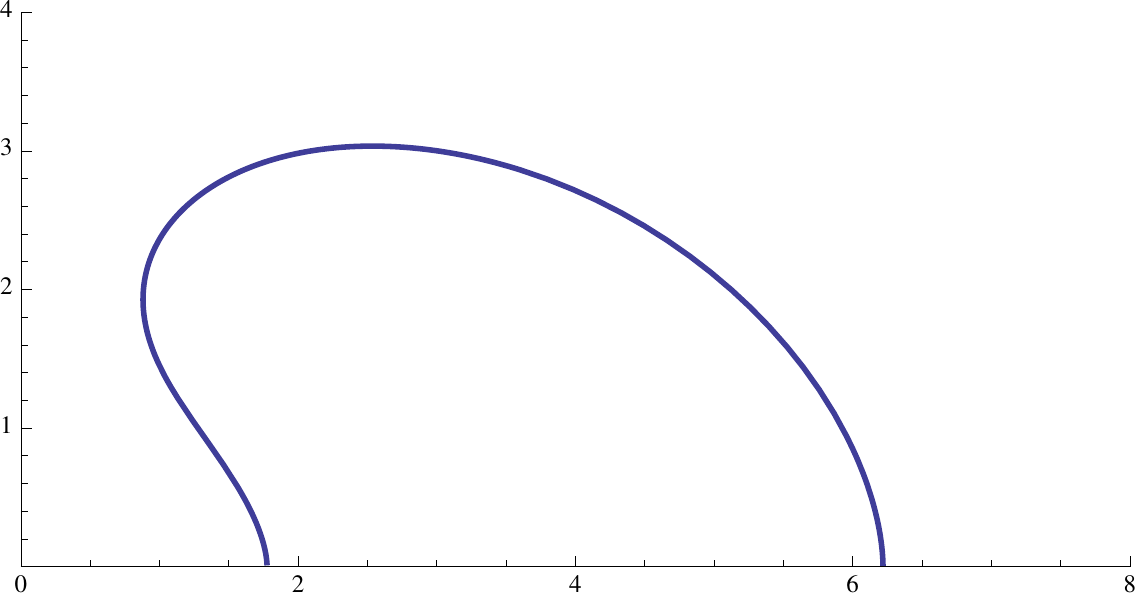} \ \  
\includegraphics[width=0.41\linewidth]{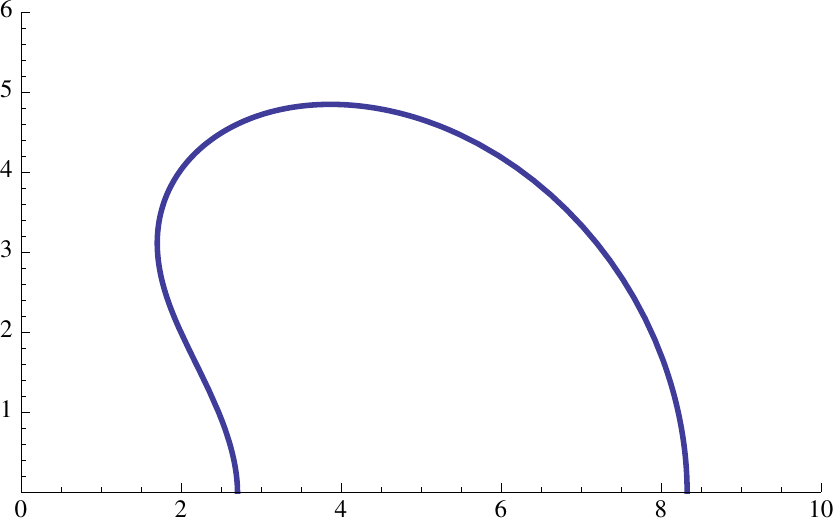}
\end{center}
\caption{Generating curves $\gamma$ for $\beta=-1$ and for $\beta =-1.5$}\label{f5}
\end{figure}

\begin{theorem}[Existence of  winglike spacelike singular $\alpha$-maximal surfaces] \label{texistencia2}

\noindent For any $\alpha >1$, there exist, up to an homothety (translation), two entire spacelike singular $\alpha$-maximal  graphs (translating solitons) in $\L^3$ with linear growth and with an  isolated singularity at the origin which is asymptotics to the light cone.

\

\noindent {\rm This kind of examples will be called either {\sl winglike solitons}  or {\sl  winglike singular $\alpha$-maximal surfaces}, see Figure \ref{fig6}}.
\end{theorem}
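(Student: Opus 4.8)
The plan is to run the same machinery as in Theorem \ref{texistencia1}, but now feeding the Case II (winglike) generating curve into the Calabi correspondence instead of the Case I one. Concretely, I would start from a rotationally symmetric singular $\beta$-minimal surface $\psi$ in $\R^3$ with $\beta\leq -1$ whose generating curve $\gamma(s)=(x(s),0,u(s))$ is of winglike type, i.e. satisfies (c) and (d) and avoids the axis ($x(s)\neq 0$, $u(s)>0$ on $(0,L)$); the existence of such a $\gamma$ is furnished by \cite[Theorem 8]{R1}. Applying Proposition \ref{p1} (equivalently Theorem \ref{th1}), its Calabi transform is the revolution surface $\widetilde{\psi}=(\lambda\cos t,\lambda\sin t,\vartheta(u))$ generated by $\widetilde{\gamma}=(\lambda,0,\vartheta(u))$ with $\lambda=u^\beta x\cos z$ and $\dot{\vartheta}(u)=u^\beta$ as in \eqref{curve}, which is spacelike singular $\alpha$-maximal with $\alpha=\frac{\beta}{\beta+1}>1$ when $\beta<-1$ and a translating soliton when $\beta=-1$. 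Since $\frac{\beta}{\beta+1}=1-\frac{1}{\beta+1}$ is an increasing bijection of $(-\infty,-1)$ onto $(1,\infty)$, letting $\beta$ vary covers every prescribed $\alpha>1$.

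The core of the argument is the qualitative analysis of $\widetilde{\gamma}$. First I would record, exactly as in Theorem \ref{texistencia1} and using \eqref{fe1} together with \eqref{fgeq1}, the two identities
\begin{equation*}
\frac{d\lambda}{ds}=u^\beta>0,\qquad \frac{d\vartheta}{d\lambda}=\sin z .
\end{equation*}
Positivity of $d\lambda/ds$ shows that $\lambda$ is strictly increasing on $[0,L]$. Because the wing avoids the axis, $\lambda=u^\beta x\cos z$ vanishes in the interior only where $\cos z$ does, and strict monotonicity forces a unique such zero, namely the point $s_1$ of (d); thus $\lambda(s_1)=0$. Near the endpoints $u\to 0$ and, since $|u'|=|\sin z|\to 1$ by (c), one has $u\sim c\,s$ (resp. $u\sim c\,(L-s)$), so $\int u^\beta\,ds$ diverges precisely because $\beta\leq -1$, giving $\lambda(0)=-\infty$ and $\lambda(L)=+\infty$. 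Consequently the two sub-arcs $s\in[0,s_1]$ and $s\in[s_1,L]$ project, through the radius $\rho=|\lambda|$, onto two radial profiles defined for all $\rho\in[0,\infty)$; by rotation these are the two entire spacelike singular $\alpha$-maximal graphs claimed, and the generic asymmetry of the two wings makes them distinct.

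It then remains to read off the geometry from $d\vartheta/d\lambda=\sin z$. At $s_1$ one has $\lambda=0$ and $\cos z(s_1)=0$, so the Euclidean angle function $\langle \vec{e}_3,N\rangle=\cos z$ vanishes there; by the Remark following Theorem \ref{th1} this is exactly a singular point of $\widetilde{\psi}$, located at the origin, and since $\sin z(s_1)=\pm 1$ the profile reaches the vertex with limiting slope $\pm1$, i.e. with tangent cone the light cone. At the other ends $\lambda\to\pm\infty$ while $\sin z\to\pm1$, so $\vartheta$ has linear growth and is likewise asymptotic to the light cone, whereas $|\sin z|<1$ on the open arcs keeps $\overline{W}>0$ and the graph spacelike. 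Finally I would obtain uniqueness up to homothety (for $\beta<-1$) or vertical translation (for $\beta=-1$) from the tangency principle for spacelike $[\varphi,\vec{e}_3]$-maximal graphs, verbatim as in Theorem \ref{texistencia1}.

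The main obstacle I anticipate is the endpoint analysis: rigorously justifying the asymptotics $u\sim c\,s$, $u\sim c\,(L-s)$ and the consequent divergence of $\int u^\beta\,ds$ (which is exactly where the hypothesis $\beta\leq -1$ enters and separates the soliton case $\beta=-1$ from the singular $\alpha$-maximal case $\beta<-1$), together with confirming that $\sin z$ stays strictly inside $(-1,1)$ on the open arcs so that each piece is a genuine \emph{entire} spacelike graph rather than a merely formal profile.
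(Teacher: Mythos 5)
Your proposal follows essentially the same route as the paper: feed the Case II winglike curve into the correspondence of Proposition \ref{p1}, use $d\lambda/ds=u^{\beta}>0$ and $d\vartheta/d\lambda=\sin z$ to show $\lambda$ sweeps all of $\R$ with a unique zero at $s_{1}$ (the light-cone singularity) and linear growth at infinity, and read off the two entire graphs from the two sub-arcs $[0,s_{1}]$ and $[s_{1},L]$. The only minor divergence is at the end: the paper obtains uniqueness from ODE uniqueness for the system \eqref{fgeq1}--\eqref{fgeq2} with fixed data at $s_{1}$ rather than from the tangency principle, and your explicit justification of $\int u^{\beta}\,ds=\infty$ near the endpoints (via $u\sim cs$) is actually a point the paper's own argument passes over, since $d\lambda/ds\to\infty$ alone does not force $\lambda\to\pm\infty$.
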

\begin{figure}[htb]
\begin{center}
\includegraphics[width=0.4\linewidth]{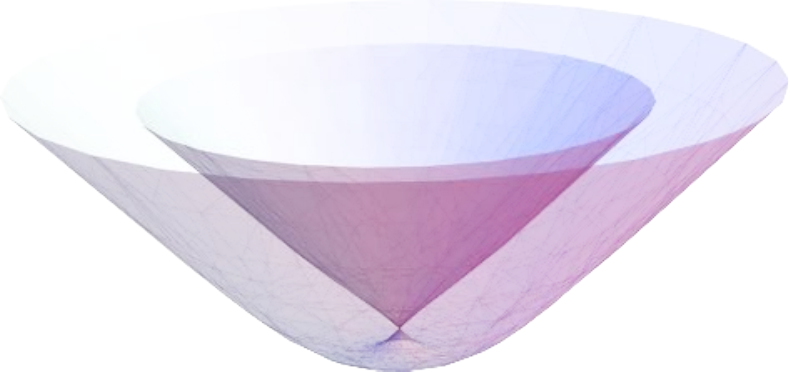} \ \ \ 
\includegraphics[width=0.4\linewidth]{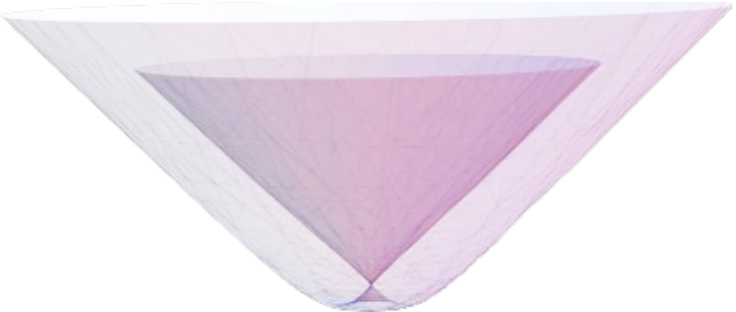}
\end{center}
\caption{winglike soliton and singular $3$-maximal winglike in $\L^3$} \label{fig6}
\end{figure}
\begin{proof}
As in the proof of Theorem \ref{texistencia1}, the curve $\widetilde{\gamma}= (\lambda, 0,\vartheta(u) )$ with 
\begin{align}
& \lambda = u^\beta x \cos{z}, \quad \dot{\vartheta}(u) =u^\beta, \label{curve1}
\end{align}
is the generating curve of a revolution spacelike $\frac{\beta}{\beta+1}$-maximal surface of elliptic type (respectively, translating soliton)  with  $\beta<-1$ (respectively, with $\beta=-1$) in $\L^3$, 
$\widetilde{\psi}$ given by
$$ \widetilde{\psi}:=(\lambda \cos (t), \lambda \sin (t), \vartheta(u)),$$ 
which verifies,
\begin{align*}
&\frac{d\lambda}{ds} = u^\beta ,\qquad  \frac{d\vartheta}{ds} = u^\beta \sin{z} 
\end{align*}
Hence, $  \lim_{s\rightarrow 0,L} \frac{d\lambda}{ds}=\infty $, $  \lim_{s\rightarrow 0,L} \frac{d\vartheta}{ds}=-\infty$ and $\lambda(s)$ increases in $]0,L[$ from $-\infty$ to $\infty$ and  $$\lim_{s\rightarrow 0,L}\vartheta(s)=-\infty.$$  
On the other hand, 
\begin{align*}
& \lim_{s\rightarrow s_1}\lambda(s) = 0,\quad   \lim_{s\rightarrow  s_1}\vartheta = \frac{u(s_1)^{\beta+1}}{\beta+1},  \ \text{(= $\log{u(s_1)}$)} \quad \text{if $\beta<-1$ ($\beta=1$)}.\\
\end{align*}
and
$$ \frac{d\vartheta}{d\lambda} = \sin{z}\qquad \frac{d^2\vartheta}{d\lambda} = \frac{\beta \cos^2{z}}{u^{\beta+1}} - \frac{\sin{2z}}{2 xu^\beta}. $$ 
Finally, 
$$ \lim_{\lambda\rightarrow -\infty,\infty} \frac{d\vartheta}{d\lambda} =-1, \quad  \lim_{\lambda \rightarrow 0 } \frac{d\vartheta}{d\lambda}=1,$$ which says that the isolated singularity  is asymptotic to the light cone. Moreover, as a function of $\lambda$,  $\vartheta$ is of linear growth (see Figure \ref{ff5}). Uniqueness follows by applying  uniquiness of solution of \eqref{fgeq1} and \eqref{fgeq2} with the same initial conditions at $s_1$ and having in mind that a vertical translation moves any translating soliton into another translating soliton and a Euclidean homothety with center at the origin moves any spacelike singular $\alpha$-maximal surface into another spacelike singular $\alpha$-maximal surface. 

 In  Figure \ref{fig6} we have  pictures of the   rotational surfaces with generating curve $ (\lambda,0, -\vartheta(u))$ for $\beta=-1$ and for $\beta=-1.5$.
\end{proof}
\begin{figure}[htb] 
\begin{center}
\includegraphics[width=0.4\linewidth]{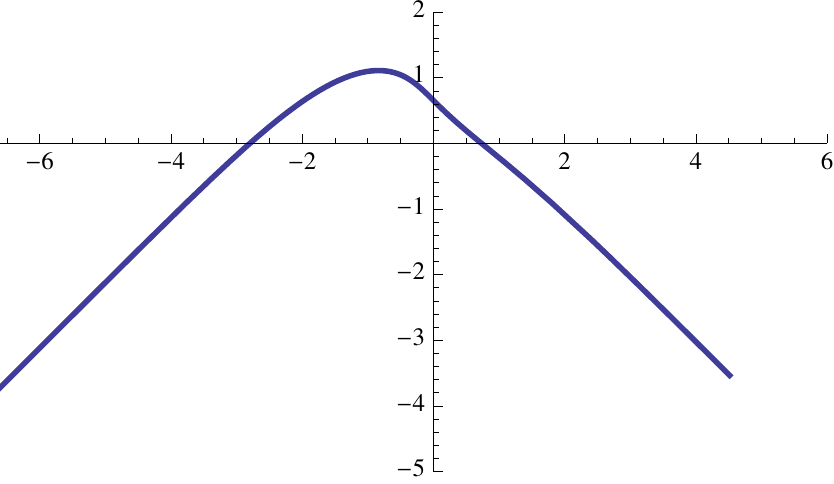} \ \ \ 
\includegraphics[width=0.44\linewidth]{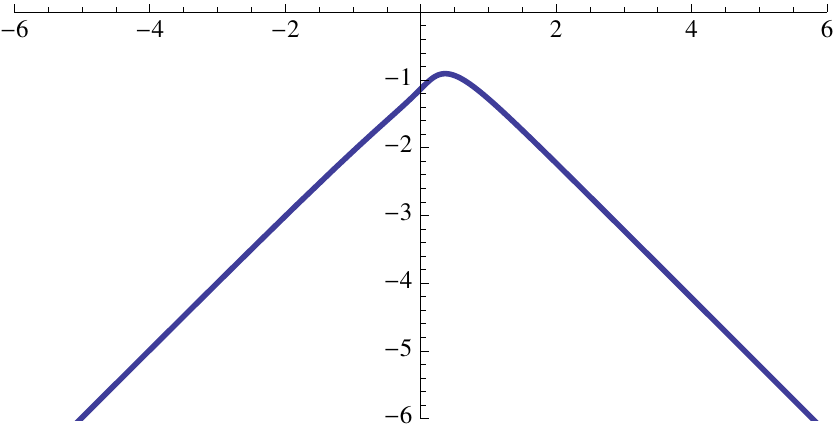}
\end{center}
\caption{generating curve $\widetilde{\gamma}$ with $\beta=-1$ and $\beta=-1.5$.} \label{ff5}
\end{figure}

{\rm If  $f(\overline{u})\equiv 1$ or $f( \overline{u}) = \alpha/\overline{u}$, for some $ \alpha>1$, the Theorem \ref{texistencia2} gives}
\begin{corollary} 
\noindent  For each positive real number $a$,
\begin{align}
&\left\{
\begin{aligned}
&\frac{\overline{u}''}{1- \overline{u}'^2} = f(\overline{u})- \frac{\overline{u}\,'}{r}, \qquad r\in ]0,\infty[ ,\\
&\overline{u}(0)= a>0, \qquad \overline{u}\,'(0) =-1
\end{aligned}\right.\label{pvi1}
\end{align}
has an unique  solution  $\overline{u}\in {\cal C}^2([0,\infty[)$. Moreover, it is strictly convex, has  a minimum in $]0,\infty[$  and satisfies,
$$ \lim_{r\rightarrow\infty} \overline{u}\,' = 1.$$
Moreover,
\begin{align}
\left\{
\begin{aligned}
&\frac{\overline{u}''}{1- \overline{u}'^2} = f(\overline{u})- \frac{\overline{u}\,'}{r}, \qquad r\in ]0,\infty[ ,\\
&\overline{u}(0)= a>0, \qquad \overline{u}\,'(0) =1
\end{aligned}\right.\label{pvi2}
\end{align}
has an unique  solution  $\overline{u}\in {\cal C}^2[0,\infty[$ such that 
$$ \lim_{r\rightarrow\infty} \overline{u}\,' = 1.$$ 
\end{corollary}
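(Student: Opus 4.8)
The plan is to recognize both initial value problems as the profile equation of a rotationally symmetric spacelike graph in $\L^3$ and then to import the geometric existence and uniqueness of Theorem~\ref{texistencia2}. First I would observe that the equation shared by \eqref{pvi1} and \eqref{pvi2} is exactly the radial $[\varphi,\vec{e}_3]$-maximal equation \eqref{Lrfe} for the two admissible potentials, $\dot\varphi\equiv-1$ (hence $f\equiv1$, the translating soliton case) and $\dot\varphi=-\alpha/\overline{u}$ (hence $f=\alpha/\overline{u}$, the singular $\alpha$-maximal case with $\alpha=\beta/(\beta+1)>1$ and $\beta<-1$). Thus a solution $\overline{u}$ on $[0,\infty[$ is precisely the profile of the rotational spacelike graph $(r\cos t,r\sin t,\overline{u}(r))$, a translating soliton (respectively a singular $\alpha$-maximal surface), and conversely. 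The condition $|\overline{u}'(0)|=1$ makes this graph tangent to the light cone at $r=0$, forcing the isolated light-cone-asymptotic singularity studied in Case II; the two signs $\overline{u}'(0)=\mp1$ select the two sheets of the cone and match the two winglike graphs furnished by Theorem~\ref{texistencia2}.

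For existence and uniqueness I would argue as follows. Theorem~\ref{texistencia2} provides, for each sign, one such winglike graph, unique up to a Euclidean homothety centered at the origin in the singular $\alpha$-maximal case and up to a vertical translation in the soliton case. Taking its generating curve as in that proof, $\widetilde{\gamma}=(\lambda,0,\vartheta(u))$ with $\lambda=u^\beta x\cos z$ and $\dot\vartheta(u)=u^\beta$, and viewing $\overline{u}=\pm\vartheta$ as a function of $r=|\lambda|$, yields a profile solving the equation with the prescribed initial slope. The homothety $\overline{u}\mapsto\kappa\,\overline{u}(\,\cdot\,/\kappa)$ preserves both the equation and the initial slope while scaling $\overline{u}(0)$ over all of $]0,\infty[$, so choosing $\kappa$ with $\overline{u}(0)=a$ gives a solution for every $a>0$; since the surface is unique modulo this one-parameter group, fixing $a$ pins down $\kappa$ and forces uniqueness. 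The soliton case is identical, with vertical translations in place of homotheties.

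The qualitative assertions I would read off the sign of $\overline{u}''=(1-(\overline{u}')^2)\bigl(f(\overline{u})-\overline{u}'/r\bigr)$. Since the graph is spacelike, $1-(\overline{u}')^2>0$ on $]0,\infty[$, so $\overline{u}''$ has the sign of $f(\overline{u})-\overline{u}'/r$. For $\overline{u}'(0)=-1$ this factor is positive near $r=0$ (the term $-\overline{u}'/r$ behaves like $1/r$) and, using $\alpha>1$, remains positive, giving strict convexity; as $\overline{u}'$ then rises from $-1$, the intermediate value theorem produces a unique interior zero of $\overline{u}'$, the asserted minimum, while the linear growth established in Theorem~\ref{texistencia2} yields $\overline{u}'\to1$. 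For $\overline{u}'(0)=1$ the same factor is negative near the origin and positive for large $r$, so $\overline{u}'$ first decreases and then returns to $1$, again with $\overline{u}'\to1$. Finally, a local expansion $\overline{u}'(r)=\pm1\mp c\,r^2+\cdots$ forces $\overline{u}''(0)=0$, so the profile is $C^2$ up to $r=0$ even though the surface carries a conical singularity there.

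The main obstacle is exactly this degeneracy at $r=0$: because $|\overline{u}'(0)|=1$ makes $1-(\overline{u}')^2$ vanish while $\overline{u}'/r$ blows up, the problem is doubly singular at the initial point and the naive Cauchy problem admits a one-parameter family of local solutions (the coefficient $c$ above is free). Singling out the member that extends to all of $[0,\infty[$ with the correct light-cone asymptotics is a genuine connection problem; founding the proof on Theorem~\ref{texistencia2} is what circumvents it, since the geometric construction has already performed this selection and delivered the global existence and uniqueness that a direct phase-plane analysis of \eqref{fgeq1}--\eqref{fgeq2} would otherwise have to supply.
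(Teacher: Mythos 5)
Your proposal is correct and follows essentially the same route as the paper, which offers no argument for this corollary beyond the single remark that it is what Theorem~\ref{texistencia2} ``gives'' for $f\equiv 1$ and $f=\alpha/\overline{u}$: you carry out precisely that reduction, identifying the two initial slopes $\mp 1$ with the two entire winglike graphs over the punctured plane and fixing the free homothety (respectively translation) parameter by the value $\overline{u}(0)=a$. Your closing observation that the Cauchy problem is doubly degenerate at $r=0$, so that existence and uniqueness must be inherited from the geometric construction rather than from standard ODE theory, makes explicit the one point the paper leaves entirely tacit.
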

\subsection{Rotational surfaces of hyperbolic type }
%From the Calabi's correspondence we give a relationship between the $\Delta$-Wings translating soliton in $\mathbb{R}^{3}$ and $(\alpha-1)$-catenaries in the Minkowski space $\mathbb{L}^{3}$. These examples were studied by D.Hoffman, T.Ilmanen, F.Mart\'in and B.White in \cite{HIMW} where they clasified the translating soliton graphs. In fact, the $\Delta$-Wings are the unique complete translator $u^{a}:(-a,a)\rightarrow\mathbb{R}$ that is not a tilted Grim reaper where $a>\pi/2$. 
%\\
%
%The $\Delta$-Wings are examples of translating soliton graphs verifying the following properties:
%\begin{enumerate}
%\item $u^{a}(x,y)=u^{a}(-x,y)=u^{a}(x,-y)$.
%\item The Gauss curvature of the graph is everywhere positive.
%\item $\text{sup}(a-\vert x\vert)\sqrt{1+\vert\nabla u^{a}\vert^{2}}\leq C(A)$ where $C(A)<\infty$ and $\nabla$ is the gradient in $\mathbb{R}^{3}$ for all $a\leq A, L\geq 1$ and $(x,y)\in [-a,a]\times [-L,L]$.
%\end{enumerate}
%\newpage

In this section, we study rotational $\alpha$-maximal singular surfaces of hyperbolic type in  $\L^3$.
%and use the  Calabi's correspondence to give new examples of $\alpha$-minimal singular surfaces in $\R^3$.
% The vertical graphs $f:(-a,a)\rightarrow\mathbb{R}^{+}$ are called ($\alpha+1)$-catenaries if they satisfy the following differential equation,
%\begin{equation}
%\label{catenaria}
%\frac{f''(s)}{1-f'(s)^{2}}+\frac{1+\alpha}{f(s)}=0 \text{ with } \vert f'(s)\vert<1,
%\end{equation}
%or equivalently,
%\begin{equation}
%\label{catenequ}
%\sqrt{1-f'(s)^{2}}=kf^{\alpha+1}(s) \text{ for some constant } k\in\mathbb{R}^{+}_{0}.
%\end{equation}

These surfaces are invariant by  the 1-parameter group of hyperbolic rotations of the Lorentz group which  fix the $\vec{e}_{1}$ spacelike direction,
%$$\mathcal{G}=\{ \left( \begin{array}{ccc}
%1 & 0 & 0 \\
%0 & \text{cosh}(t) & \text{sinh}(t) \\
%0 & \text{sinh}(t) & \text{cosh}(t) \end{array} \right) : t\in\mathbb{R}\}. $$
A such surface with generating curve the arc-length parametrized  spacelike curve $\widetilde{\gamma}=(x(s), 0,u(s))$, $u>0$, $s\in I\subseteq \R$ is given by,
\begin{equation}
\label{para}
\widetilde{\psi}(s,t)=\left(x(s) ,u(s)\, \sinh(t) ,u(s)\, \cosh(t)\right), \quad (s,t)\in I\times \R,
\end{equation}
with  
\begin{align}
\label{vectortangente}
&x'(s)=\cosh(z(s)), & u'(s)=\sinh(z(s)).
\end{align}

The Gauss map of  $\widetilde{\psi}$ writes as
\begin{equation}\label{Gaussmap}
\widetilde{N}(s,t)=\left(\sinh(z(s)),\cosh(z(s))\sinh(t),\cosh(z(s))\cosh(t) \right),
\end{equation}
and we have,
\begin{equation}
\label{coefi}
\begin{array}{ll}
\ll \widetilde{\psi}_s,\widetilde{\psi}_s \gg=1, &\ll \widetilde{\psi}_s,\widetilde{N}_s\gg= z'(s), \\
\ll \widetilde{\psi}_t,\widetilde{\psi}_t \gg=u(s)^{2}, &\ll \widetilde{\psi}_t,\widetilde{N}_t\gg=u(s) \cosh (z(s)),  \\
\ll \widetilde{\psi}_s,\widetilde{\psi}_t \gg=0, &\ll \widetilde{\psi}_s,\widetilde{N}_t\gg=0. \\
\end{array}
\end{equation}
From (\ref{coefi}), the mean curvature field of $\widetilde{\psi}$ is given by
\begin{equation}
\label{meancurvature}
{\widetilde{\textbf{H}}}=\left(\frac{u\ z' +\cosh (z)}{u} \right)\widetilde{N}= \left(\frac{u\ \mathcal{K} +\cosh (z)}{u} \right)\widetilde{N},
\end{equation}
where $\mathcal{K}$ is the curvature of $\widetilde{\gamma}$.

Consequently, from \eqref{fmaximal}, \eqref{para} and \eqref{Gaussmap}, the spacelike surface $\widetilde{\psi}$ is a singular $\alpha$-maximal spacelike surface if and only if 
\begin{equation}
\label{equz}
\frac{z'(s)}{\text{cosh}(z(s))}+\frac{1+\alpha}{u(s)}=0,
\end{equation}
or equivalently,
\begin{equation}
\label{catenequ}
\cosh (z) \ u^{\alpha +1} = k, \quad \text{ for some positive constant $k$}.
\end{equation}
Moreover, from   \eqref{equz} and (\ref{catenequ}),  the curvature of $\widetilde{\gamma}$ satisfies
\begin{equation}
\label{curvaturecurve}
\mathcal{K}=-(1+\alpha)k \ u^{-\alpha-2}.
\end{equation}

\

From the spacelike condition, \eqref{vectortangente}, \eqref{equz} and \eqref{curvaturecurve}, if $1+\alpha >0$ (respectively, $1+\alpha <0$), the generating curve $\widetilde{\gamma}$ is the graph of a strictly concave (respectively, convex) function $u(x)$ solution of the following ordinary differential equation,
\begin{align}
&\frac{d^2u}{dx^2}= -\frac{\alpha +1}{u}(1-(\frac{du}{dx})^2),\label{cequ}
\end{align}
or equivalently, 
\begin{align}
&\frac{du}{dx} = \tanh (z), & \frac{dz}{dx}=-\frac{1+\alpha}{u}.\label{asystem1}
\end{align}
As a first integrate of this system is given by  \eqref{catenequ}, if $1+\alpha\neq 0$, there exists a unique $x_0\in\R$ such us $u'(x_0)=0$ (see  Figure \ref{f8} for a representation of the trajectories of \eqref{asystem1}). So, up to translation in the $x$-axis, we may assume that $x_0=0$ and consider the solutions to \eqref{asystem1} satisfying
\begin{equation}
u(0)=u_0>0, \qquad \frac{du}{dx}(0) =0. \label{ciniciales}
\end{equation}
By taking $\overline{u}(x)= u(-x)$, we see easily that, if $1+\alpha\neq 0$, a solution to   \eqref{cequ}-\eqref{ciniciales} is even and, from \eqref{catenequ} and \eqref{asystem1}, it is defined in the  interval $]-\Lambda_{u_0},\Lambda_{u_0}[$,  where 
\begin{equation}
\Lambda_{u_0} = |\alpha+1| \int_{0}^\infty \frac{ u_0\ d\tau}{(\cosh \tau)^\frac{1}{\alpha+1}}.\label{Lambda}
\end{equation}

\begin{figure}[htb] 
\begin{center}
\includegraphics[width=0.4\linewidth]{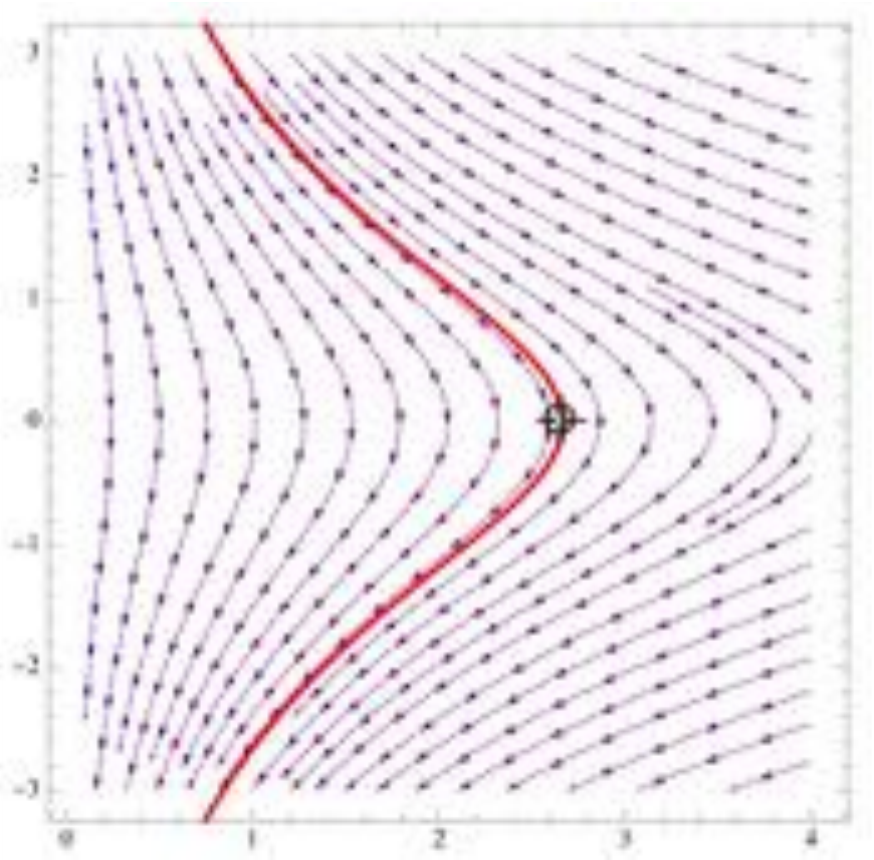} \qquad
\includegraphics[width=0.4\linewidth]{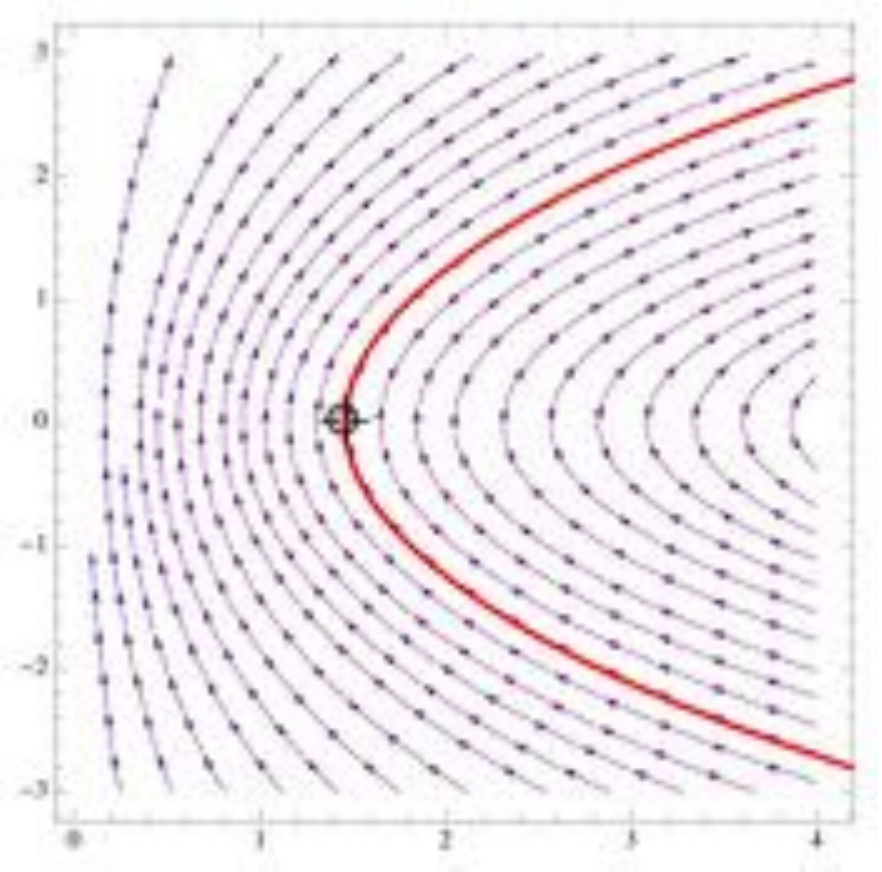}
\end{center}
\caption{Phase portrait of  \eqref{asystem1} for $\alpha = 1$ (left) and  for  $\alpha=-3$ (right).} \label{f8}
\end{figure}
So, from \eqref{catenequ},  \eqref{asystem1} and   \eqref{Lambda}, we have
\begin{proposition} \label{p6} Let $\widetilde{\gamma}=(x,0,u)$ be  the generating curve of a rotational spacelike $\alpha$-maximal surface of hyperbolic type. Then, up to horizontal translation, we have, see figure \ref{f9}
\label{examplesL3}
\begin{itemize}
\item if $1+\alpha>0$,  $\widetilde{\gamma}$ is the graph of a symmetric and strictly concave function $u(x)$ in a bounded interval $-]\Lambda_{u_0},\Lambda_{u_0}[$ which  has a maximum at $0$ and meets (asymptotically to the light cone)  the  $x$-axis in $ \pm\Lambda_{u_0} $,  that is
$$\lim_{x\rightarrow \pm \Lambda_{u_0}}u(x)=0,\quad  \lim_{x\rightarrow \pm \Lambda_{u_0}}\frac{du}{dx}=-1.$$
\item if $1+\alpha<0$,  $\widetilde{\gamma}$   is the graph of a symmetric and strictly convex function $u(x)$ in $]-\infty,\infty[$ which  has a minimum at $0$ and has  linear growth,  in fact 
$$\lim_{x\rightarrow \pm \infty}u(x)=\infty, \quad \lim_{x\rightarrow \pm \infty}\frac{du}{dx}=1.$$
\item if $1+\alpha=0$, then $\widetilde{\gamma}$ is a straight line.
\end{itemize}
\end{proposition}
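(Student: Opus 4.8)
The plan is to integrate the first–order system \eqref{asystem1} by exploiting the first integral \eqref{catenequ}, reducing the whole analysis to a single quadrature whose convergence dictates the shape of the domain. First I would fix the constant $k$: the initial conditions \eqref{ciniciales} give $u(0)=u_0$ and $(du/dx)(0)=0$, so \eqref{vectortangente} forces $z(0)=0$, and substituting into \eqref{catenequ} yields $k=u_0^{\alpha+1}$, hence $\cosh z=(u_0/u)^{\alpha+1}$ along the entire trajectory. Using $du/dx=\tanh z$ together with $1-1/\cosh^2 z=\tanh^2 z$, I then obtain the separated equation $du/dx=\pm\sqrt{1-(u/u_0)^{2(\alpha+1)}}$, the sign being negative once $u$ begins to decrease and positive once it increases. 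Evenness is immediate from the autonomy of \eqref{cequ} (the first derivative enters only squared): $\overline u(x)=u(-x)$ solves the same problem \eqref{cequ}--\eqref{ciniciales}, so uniqueness gives $\overline u\equiv u$; and the strict concavity (resp. convexity) according to the sign of $1+\alpha$ is exactly the remark recorded just before \eqref{cequ}, which combined with the critical point at $0$ produces the maximum (resp. minimum) there.

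The heart of the argument is the quadrature obtained by separating variables. Integrating $dx=\mp du/\sqrt{1-(u/u_0)^{2(\alpha+1)}}$ and substituting $u=u_0(\cosh\tau)^{-1/(\alpha+1)}$ collapses the radical, since $1-(u/u_0)^{2(\alpha+1)}=\tanh^2\tau$, and turns the integral into $|\alpha+1|^{-1}\int u_0\,(\cosh\tau)^{-1/(\alpha+1)}\,d\tau$, which is precisely $\Lambda_{u_0}$ as defined in \eqref{Lambda} after accounting for the sign of $\alpha+1$. Thus the half-width of the maximal domain equals $\Lambda_{u_0}$, and everything reduces to deciding whether this improper integral converges.

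For $1+\alpha>0$ the substitution runs over $u\in(0,u_0]$, that is $\tau\in[0,\infty)$, and since $\cosh\tau\sim\tfrac12 e^{\tau}$ the integrand decays like $e^{-\tau/(\alpha+1)}$; hence $\Lambda_{u_0}<\infty$ and $u$ lives only on the bounded interval $]-\Lambda_{u_0},\Lambda_{u_0}[$. As $u\downarrow 0$ one has $\cosh z=(u_0/u)^{\alpha+1}\to\infty$, so $|z|\to\infty$ and $du/dx=\tanh z\to\mp1$, which is exactly the assertion that $\widetilde\gamma$ meets the $x$-axis tangentially to the light cone. For $1+\alpha<0$ the same computation runs over $u\in[u_0,\infty)$; now the $u$-integrand tends to $1$ as $u\to\infty$ (equivalently $(\cosh\tau)^{-1/(\alpha+1)}$ blows up), so the integral diverges, the solution extends to all of $\R$, and $du/dx=\tanh z\to\pm1$ gives the stated linear growth. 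Finally, if $1+\alpha=0$ the second equation in \eqref{asystem1} reads $dz/dx=0$, so $z$ and hence $du/dx$ are constant, and $\widetilde\gamma$ is a straight line.

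I expect the only genuinely delicate point to be the convergence/divergence dichotomy of $\Lambda_{u_0}$ together with the careful bookkeeping of the sign of $\alpha+1$ in the change of variables; the concavity, evenness and limiting slopes all follow cleanly once the first integral \eqref{catenequ} is in hand.
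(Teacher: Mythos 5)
Your argument is correct and follows essentially the same route as the paper: the first integral \eqref{catenequ} with $k=u_0^{\alpha+1}$, separation of variables via the substitution $u=u_0(\cosh\tau)^{-1/(\alpha+1)}$ leading to the quadrature \eqref{Lambda}, whose convergence/divergence according to the sign of $1+\alpha$ yields the bounded versus entire domain, with evenness from the symmetry of \eqref{cequ}--\eqref{ciniciales} and the limiting slopes from $du/dx=\tanh z\to\mp1$. The only discrepancy is the constant in front of the integral: your computation gives $|\alpha+1|^{-1}$ where \eqref{Lambda} displays $|\alpha+1|$ (your factor is the correct one; the paper's appears to be a typo), and this has no bearing on the convergence dichotomy or on any conclusion of the proposition.
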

\begin{figure}[h] 
\begin{center}
\includegraphics[width=0.4\linewidth]{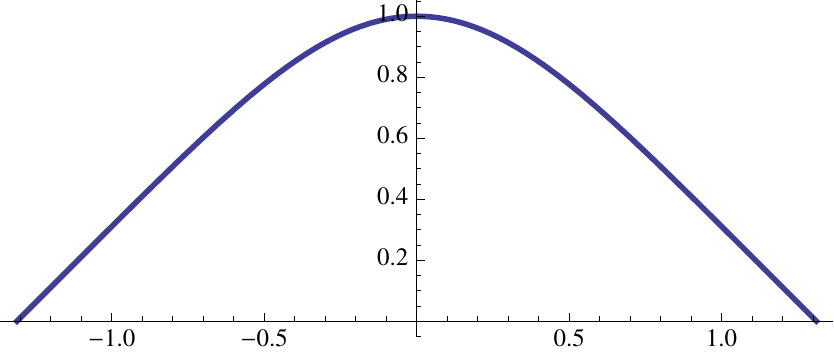} \ \ \ 
\includegraphics[width=0.44\linewidth]{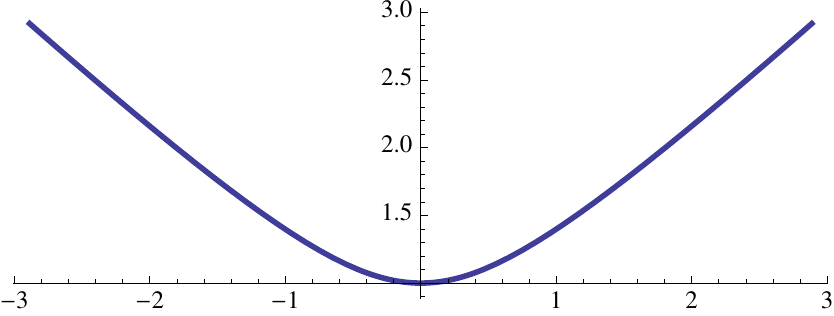}
\end{center}
\caption{Generating curve $\widetilde{\gamma}$ for $\alpha=1$ (left) and for $\alpha =-2$ (right).} \label{f9}
\end{figure}
\begin{remark}\label{r7}
Observe that from \eqref{para}, \eqref{coefi} and \eqref{catenequ}, the Gauss curvature of $\widetilde{\psi}$ is given by $$ K = (\alpha + 1)\frac{k^2 }{u^{2\alpha +4}},$$
and so, see Figure \ref{f10}
\end{remark}
\begin{theorem} \label{Kurv} Let $\widetilde{\psi}$ be  a spacelike surface $\widetilde{\psi}$  a singular $\alpha$-maximal spacelike graph, parametrized as in \eqref{para} with $\widetilde{\gamma}=(x,0,u)$ as in Proposition \ref{p6}, then
\begin{itemize}
\item if $1+\alpha>0$, the Gauss curvature of $\widetilde{\psi}$  is positive (that is, the second fundamental form is semidefinite and nondegenerate). Moreover,  
$\lim_{x\rightarrow \pm\Lambda_0}K = \infty.$  
\item if $1+\alpha<0$, $\widetilde{\psi}$ is an entire graph, strictly convex with a flat behaviour at infinity,   that is, 
$\lim_{x\rightarrow \pm\infty}K =0.$

(These examples will be called singular  {\sl  $\alpha$-maximal bowls of hyperbolic type}.)
\item If $\alpha=-1$, $\widetilde{\psi}$  is flat.
\end{itemize} 
\end{theorem}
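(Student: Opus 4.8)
The plan is to deduce all three cases directly from the explicit curvature formula $K=(\alpha+1)k^2/u^{2\alpha+4}$ recorded in Remark~\ref{r7}, feeding in the qualitative description of the generating curve $\widetilde{\gamma}$ from Proposition~\ref{p6}. Because $k>0$ and $u>0$ everywhere, the pointwise sign of $K$ equals the sign of $\alpha+1$; this already settles the flat case $\alpha=-1$, where $K\equiv0$ and, consistently, $\widetilde{\gamma}$ degenerates to a straight line by Proposition~\ref{p6}, so that $\widetilde{\psi}$ is a (developable) flat surface. What remains is to determine the boundary and asymptotic behaviour of $K$ and to convert the sign of $K$ into the stated information on the second fundamental form and the global shape of $\widetilde{\psi}$.

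For the limits I would substitute the endpoint behaviour of $u$ from Proposition~\ref{p6} into the curvature formula. In the case $1+\alpha>0$ the profile is defined on the bounded interval $]-\Lambda_{u_0},\Lambda_{u_0}[$ and satisfies $u\to0$ at the two endpoints; since then $2\alpha+4>0$, the denominator $u^{2\alpha+4}\to0$ and hence $K\to+\infty$, giving the first claim. In the case $1+\alpha<0$ the profile is entire with $u\to\infty$ and $u'\to1$ as $x\to\pm\infty$, so $u$ grows linearly; inserting this growth into the formula forces $u^{2\alpha+4}\to\infty$ and therefore $K\to0$, which is the asserted flat behaviour at infinity.

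To handle the second fundamental form and convexity I would compute the two principal curvatures in the orthogonal parametrization \eqref{para}. By \eqref{coefi} both the first and the second fundamental forms are diagonal, so the coordinate directions are principal; using \eqref{equz} to write $z'=-(1+\alpha)\cosh(z)/u$ and \eqref{catenequ} to eliminate $\cosh(z)$, the principal curvatures come out as $(1+\alpha)\cosh(z)/u$ and $-\cosh(z)/u$. When $1+\alpha<0$ both are negative, so the second fundamental form is definite and $\widetilde{\psi}$ is strictly convex; together with the fact that $u$ is defined on all of $\R$ (Proposition~\ref{p6}), the hyperbolic-rotation surface is realised as the entire spacelike graph $z=\sqrt{u(x)^2+y^2}$, and combined with $K\to0$ this yields the convex, asymptotically flat bowl. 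When $1+\alpha>0$ the two principal curvatures have opposite signs, which is exactly the regime $K>0$ with nondegenerate second fundamental form.

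The step I expect to be the main obstacle is the second limit, $\lim_{x\to\pm\infty}K=0$ in the convex case: unlike the sign statements and the blow-up at the finite endpoints, it genuinely requires the quantitative linear growth of $u$ from Proposition~\ref{p6} (the conclusion rests on the exponent $2\alpha+4$ having the right sign), so the qualitative trichotomy alone does not suffice there. Every other assertion reduces to substitution into the formula of Remark~\ref{r7} and a sign inspection of the two principal curvatures.
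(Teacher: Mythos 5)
Your route is the same as the paper's: the paper offers no argument for this theorem beyond Remark \ref{r7} (the formula $K=(\alpha+1)k^{2}/u^{2\alpha+4}$, itself obtained from \eqref{coefi} and \eqref{catenequ}) combined with the endpoint behaviour of $u$ in Proposition \ref{p6}, and your explicit identification of the principal curvatures $-z'=(1+\alpha)\cosh(z)/u$ and $-\cosh(z)/u$ is a correct supplement that the paper leaves implicit. However, there is a genuine gap in your second bullet. You assert that the linear growth of $u$ forces $u^{2\alpha+4}\to\infty$ as $x\to\pm\infty$, and you even note that ``the conclusion rests on the exponent $2\alpha+4$ having the right sign'' --- but you never check that sign, and it is \emph{not} always right in the regime $1+\alpha<0$. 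One has $2\alpha+4>0$ only for $\alpha>-2$. For $\alpha=-2$ the formula gives the nonzero constant $K\equiv(\alpha+1)k^{2}=-k^{2}$, and for $\alpha<-2$ it gives $K=(\alpha+1)k^{2}\,u^{-(2\alpha+4)}$ with $-(2\alpha+4)>0$, so $|K|\to\infty$ along the ends. Hence the asserted flat behaviour at infinity follows from your substitution only for $-2<\alpha<-1$; as written, your argument (and, in fact, the second bullet of the theorem itself) fails for $\alpha\le-2$. This restriction is even foreshadowed by the paper's closing remark of the subsection, which singles out $-1>\alpha\ge-2$ as the range of complete examples.

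A smaller point: for $1+\alpha>0$ your two principal curvatures have opposite signs, so the second fundamental form is \emph{indefinite}; this is indeed the regime $K=-\det A>0$ for a spacelike surface in $\L^{3}$, but it contradicts the parenthetical ``semidefinite and nondegenerate'' in the statement (which would mean definite, hence $K<0$). Your computation is the correct one; you should flag the discrepancy rather than present an indefinite form as matching that parenthetical.
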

\begin{figure}[h] 
\begin{center}
\includegraphics[width=0.27\linewidth]{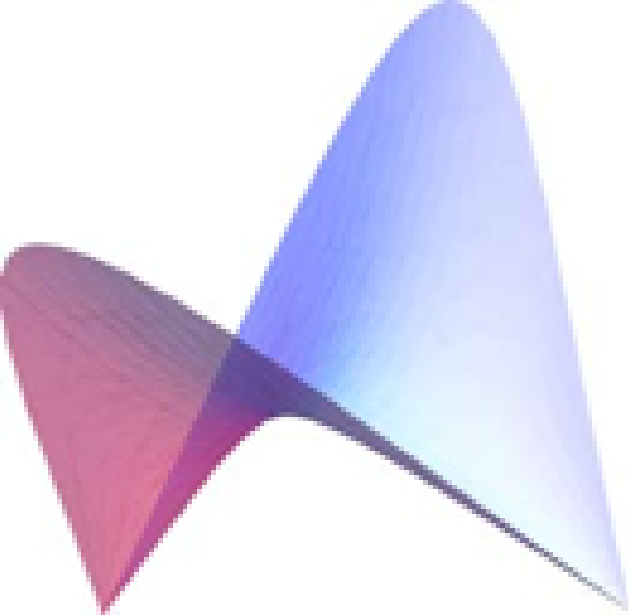} \ 
\includegraphics[width=0.34\linewidth]{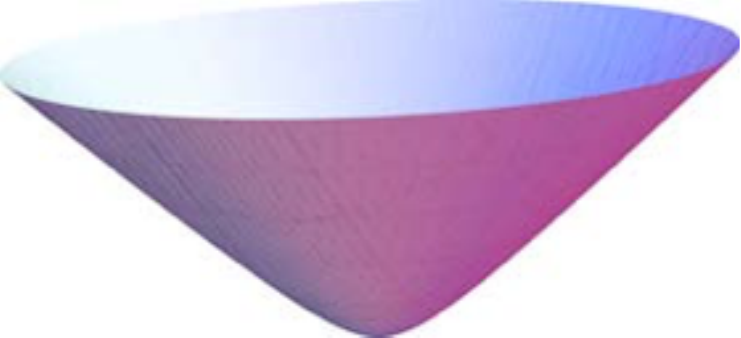} \
\includegraphics[width=0.35\linewidth]{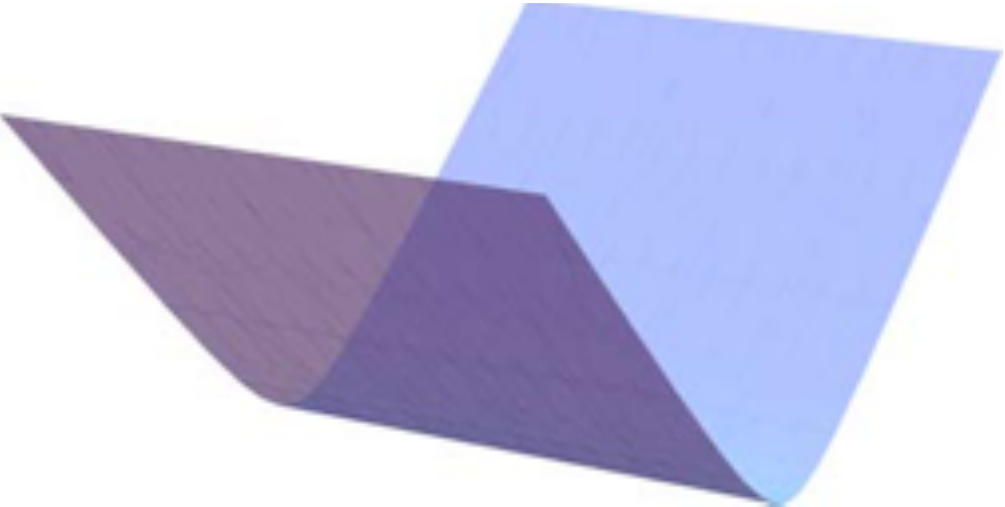}
\end{center}
\caption{Rotational singular $\alpha$-maximal surfaces of hyperbolic type for $\alpha=1$,  $\alpha =-2$ and $\alpha=-1$).} \label{f10}
\end{figure}
\begin{remark} Observe that, from \eqref{catenequ}, the divergent generating curve $\widetilde{\gamma}$ of a $\alpha$-maximal bowl of hyperbolic type,  has infinite length if and only if $$\int_0^\infty u^{\alpha +1} dx = \infty$$ which, because $u$ has a linear growth, only holds when $\alpha + 2\geq 0$. So, a singular $\alpha$-maximal bowl of hyperbolic type is complete  if and only if $-1>\alpha\geq -2$.
\end{remark}

\subsection{ Rotational  singular $\alpha$-maximal spacelike surfaces of hyperbolic type in a Calabi-pair}

%So, taking $\phi_{\tau}$ an isometry of the hyperbolic group given by
%$$\begin{pmatrix}
%1 & 0 & 0 \\
%0 & \text{cosh}(\tau) & \text{sinh}(\tau) \\
%0 & \text{sinh}(\tau) &  \text{cosh}(\tau) 
%\end{pmatrix},$$
%we get that $$\phi_{\tau}\circ X=X(s,\theta+\tau) \emph{ , } d\phi_{\tau}\circ N=N\circ X(s,\theta+\tau).$$ Hence,
%$$\ll d\phi_{\tau}(N),\vec{e}_{3}\gg=\ll \widetilde{N}\circ\phi_{\tau},\vec{e}_{3}\gg=H=\widetilde{H}$$
%and the property of be $(\alpha-1)$-catenary is invariant by hyperbolic group of isometries. Notice that, $\widetilde{H}$ and $\widetilde{N}$ is the mean curvature and Gauss map of the spacelike surface $\widetilde{X}=\phi_{\tau}\circ X$.

%Moreover, $0<\ll N,\vec{e}_{3}\gg $  and the spacelike surface $S$, associated to the parametrization $X$ , encloses a domain of $\mathbb{L}^{3}$ together the upper half space of the plane $\{ z=0\}$. So, the spacelike surface is a vertical graph in $\mathbb{L}^{3}$ and denoting by,
%$$\textbf{x}=\text{sinh}(\theta)f(s) \emph{ and } u(s,\textbf{x})=\sqrt{\textbf{x}^{2}+f(s)^{2}},$$
%we can write the parametrization $X$ as follow,
%$$\widetilde{X}(s,\textbf{x})=(s,\textbf{x},u(s,\textbf{x})) \emph{ , } s\in (-a,a) \emph{ , } \textbf{x}\in (-\delta,\delta).$$
%
%Consequently, 
In this section we study   the Calabi-pairs $(\psi,\widetilde{\psi})$ with   $\widetilde{\psi}$  a  rotational singular $\alpha$-maximal spacelike surfaces of hyperbolic type . 

\

Let $\widetilde{\psi}$ be the rotational singular $\alpha$-maximal spacelike surface of hyperbolic type given by \eqref{para} with Gauss map \eqref{Gaussmap}. If $(\psi,\widetilde{\psi})$ is a Calabi-pair,  then from Theorem \ref{th2} and \eqref{lglobalc}, $\psi$ is either a singular $\frac{\alpha}{\alpha+1}$-minimal surface if $\alpha+1\neq0$ or a translating soliton if $\alpha+1=0$ in $\R^3$ which can be parametrized as
\begin{align}
\label{param}
&\psi(s,t)=\int u(s)^{\alpha}\cosh^{\alpha}t\ \left(\vec{e}_{3}\wedge_{\mathbb{L}^{3}}(d\widetilde{\psi}\wedge_{\mathbb{L}^{3}}\widetilde{N})-\ll d\widetilde{\psi},\vec{e}_{3}\gg\vec{e}_{3}\right).
\end{align}

\

\noindent {\sc Case I: If $\psi$ is a singular $\frac{\alpha}{\alpha+1}$-minimal surfaces}

Then,  from Theorem \ref{th2}, Proposition \ref{p6}, Theorem \ref{Kurv} and  Remark \ref{r7}, we have 
\begin{theorem} Let $(\psi,\widetilde{\psi})$ be  a Calabi-pair such that  $\widetilde{\psi}$ is  the rotational singular $\alpha$-maximal,    $\alpha+1\neq0$, spacelike surface of hyperbolic type given by \eqref{para}. Then $\psi$ is  
\begin{itemize}
\item either a singular $\frac{\alpha}{1+\alpha}$-minimal $\psi$ in $\mathbb{R}^{3}$ whose Gauss curvature is strictly negative if  $1+\alpha>0$.
\item or a singular strictly convex $\frac{\alpha}{1+\alpha}$-minimal $\psi$ in $\mathbb{R}^{3}$ if  $1+\alpha<0$.
\end{itemize}
In both cases a  parametrization of $\psi$ is given by,
\begin{align*}
\psi(x,t)=\left(-\frac{k \ u'(x)}{(\alpha+1)}\text{cosh}^{\alpha+1}(t), k \int\text{cosh}^{\alpha}(t)\, dt,\frac{u^{\alpha+1}(x)\text{cosh}^{\alpha+1}(t)}{\alpha+1} \right),
\end{align*}
where $k=u_0^{\alpha +1}$ and $u(x)$ is the solution of  \eqref{cequ}-\eqref{ciniciales} satisfying the properties describe in Proposition \ref{p6}, see Figure \ref{f12}.
\end{theorem}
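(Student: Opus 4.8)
The plan is to specialize the global correspondence to the rotational data and then read off the geometry from the curvature relation of Theorem \ref{th2}. That $\psi$ is a singular $\tfrac{\alpha}{\alpha+1}$-minimal surface was already recorded just before the statement, so the two tasks left are to produce the explicit parametrization and to settle the sign of $K$. The starting point for the first task is exactly \eqref{param}, which is \eqref{lglobalc} written for $\widetilde{\psi}$ as in \eqref{para}, with weight $\mathrm{e}^{\varphi(\ll\widetilde{\psi},\vec{e}_3\gg)}=u^{\alpha}\cosh^{\alpha}t$, since for a singular $\alpha$-maximal surface $\varphi$ is $\alpha\log$ of the third coordinate and that coordinate is $u\cosh t$. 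First I would compute the two Lorentzian cross products in the integrand by the triple-product identity $a\wedge_{\L^3}(b\wedge_{\L^3}c)=-\ll a,c\gg b+\ll a,b\gg c$, which reduces everything to inner products of $\widetilde{\psi}_s,\widetilde{\psi}_t,\widetilde{N},\vec{e}_3$; all of these are immediate from \eqref{para}, \eqref{Gaussmap}, \eqref{vectortangente}, \eqref{coefi}, in particular $\ll\vec{e}_3,\widetilde{N}\gg=-\cosh z\cosh t$ and $\ll d\widetilde{\psi},\vec{e}_3\gg=-(\sinh z\cosh t)\,ds-(u\sinh t)\,dt$.

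Carrying this out, the bracket in \eqref{param} collapses to $\cosh z\cosh t\,d\widetilde{\psi}+\ll d\widetilde{\psi},\vec{e}_3\gg(\widetilde{N}-\vec{e}_3)$, and multiplication by $u^{\alpha}\cosh^{\alpha}t$ produces a vector-valued $1$-form that I would integrate componentwise. The essential simplification is the first integral \eqref{catenequ}, namely $u^{\alpha+1}\cosh z=k$ with $k=u_0^{\alpha+1}$ (the value at the symmetric point, where $z=0$ because there $\tfrac{du}{dx}=0$). Using it, the second component loses its $ds$-part and integrates to $k\int\cosh^{\alpha}t\,dt$; the third component is $d\big(u^{\alpha+1}\cosh^{\alpha+1}t/(\alpha+1)\big)$, hence integrates to $u^{\alpha+1}\cosh^{\alpha+1}t/(\alpha+1)$; and the first component equals $u^{\alpha}\cosh^{\alpha+1}t\,ds-u^{\alpha+1}\sinh z\cosh^{\alpha}t\sinh t\,dt$, which by \eqref{equz} (giving $z'=-(1+\alpha)\cosh z/u$) is exactly $d\big(-u^{\alpha+1}\sinh z\cosh^{\alpha+1}t/(\alpha+1)\big)$. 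Since $k\tanh z=u^{\alpha+1}\sinh z$ by \eqref{catenequ} and $\tfrac{du}{dx}=\tanh z$ by \eqref{asystem1}, this equals $-k\,u'(x)\cosh^{\alpha+1}t/(\alpha+1)$, and the three primitives assemble into the asserted parametrization after reparametrizing the profile by $x$ as in Proposition \ref{p6}.

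For the sign of the curvature I would invoke the relation \eqref{Lkk}, which for any Calabi-pair reads $K=-\mathrm{e}^{-2\varphi(\overline{u})}\overline{W}^{4}\widetilde{K}$, together with $\widetilde{K}=(\alpha+1)k^{2}u^{-2\alpha-4}$ from Remark \ref{r7}. Hence $K$ has the sign of $-(\alpha+1)$: if $1+\alpha>0$ then $K<0$, while if $1+\alpha<0$ then $K>0$, so that the second fundamental form of $\psi$ is definite and $\psi$ is strictly convex; the strict convexity of $\widetilde{\psi}$ in this range (Theorem \ref{Kurv}) therefore passes to $\psi$ through the conformal shape-operator correspondence.

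The only delicate points are bookkeeping rather than conceptual: fixing once and for all the sign convention for $\wedge_{\L^3}$ and the triple-product identity so that the three components emerge with the correct signs, and checking the exactness of the first-component $1$-form, whose $ds$-coefficient simplifies only after substituting the profile ODE \eqref{equz}. I expect this exactness verification (which secretly encodes that the correspondence \eqref{lglobalc} is closed, i.e. that $\psi$ is well defined) to be the main obstacle; the remainder is algebra with hyperbolic identities and the first integral \eqref{catenequ}.
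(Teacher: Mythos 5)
Your proposal is correct and follows essentially the same route as the paper, which simply cites \eqref{param} (the specialization of \eqref{lglobalc}), Theorem \ref{th2}, Proposition \ref{p6}, Theorem \ref{Kurv} and Remark \ref{r7} without writing out the integration. Your componentwise computation (the vanishing of the $ds$-part of the second component via \eqref{catenequ}, the exactness of the first component via \eqref{equz}, and the sign of $K$ from \eqref{Lkk} combined with $\widetilde{K}=(\alpha+1)k^2u^{-2\alpha-4}$) checks out and supplies exactly the details the paper omits.
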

\begin{figure}[h] 
\begin{center}
\includegraphics[width=0.3\linewidth]{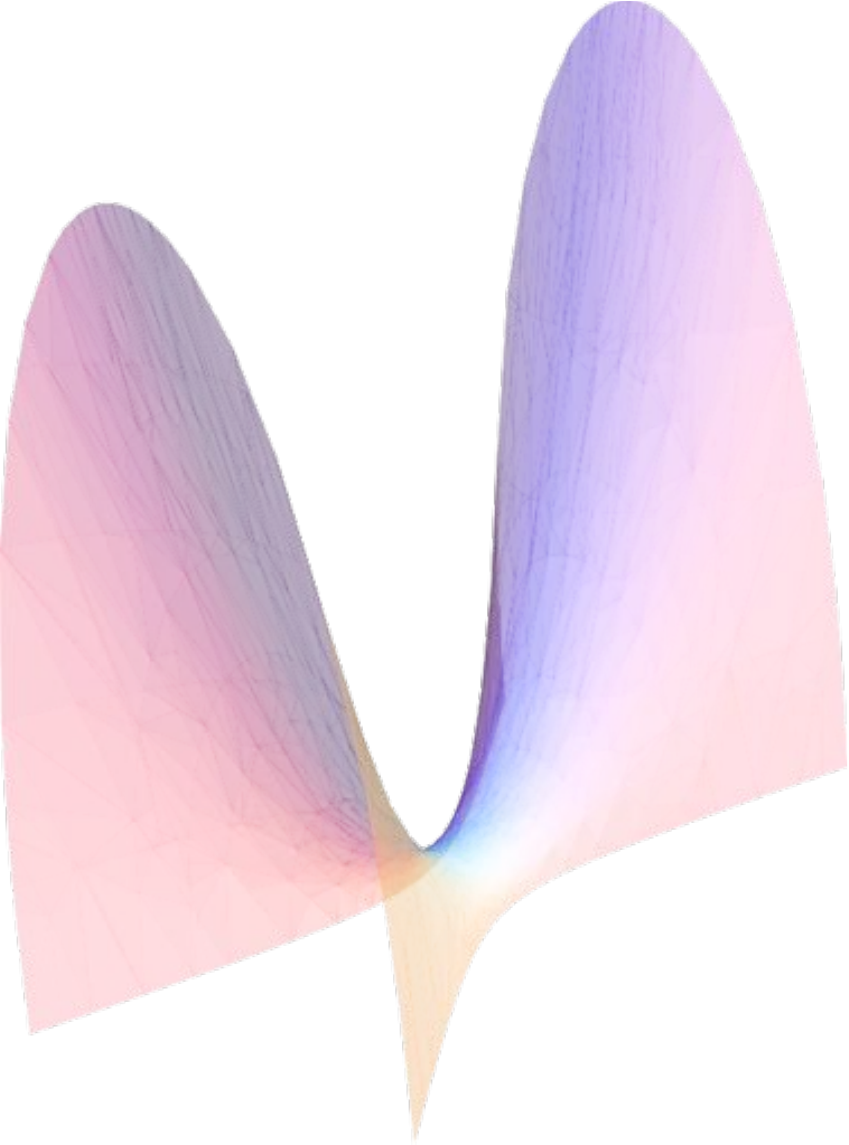} \ \ \ \ \ \
\includegraphics[width=0.5\linewidth]{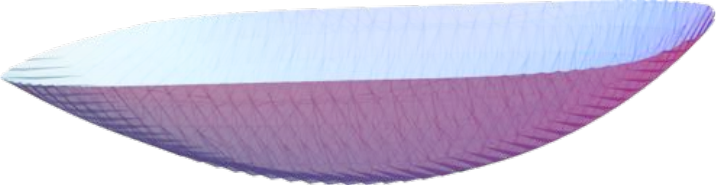} \
\end{center}
\caption{singular $\frac{\alpha}{\alpha+1}$-minimal surfaces  for $\alpha=1$ and for $\alpha =-2$}  \label{f12}
\end{figure}
%\begin{remark}
%When $\alpha=0$, the solution $u$ of (\ref{catenequ}) is given by,
%\begin{equation}
%\label{solacero}
%u(x)=\text{sin}(kx) \emph {where k depending of the initial conditions. }
%\end{equation}
%Moreover, the mean curvature of the correspondent rotational singular $\alpha$-maximal surface of hyperbolic type vanishes, that is, it is a maximal surface in the Minkowski space $\mathbb{L}^{3}$.
%\\
%
%Conquently, applying \eqref{param}, the correspondent minimal surface from the Calabi's correspondence is parametrized by,
%$$\widehat{\Psi}=\left(k\text{cos}(s)\text{cosh}(\theta),-\frac{1}{k}\theta,\text{sin}(ks)\text{cosh}(\theta)  \right).$$
%\begin{corollary} 
%The catenoid is the only minimal surface in $\mathbb{R}^{3}$ from the Calabi's correspondence for maximal surfaces in $\mathbb{L}^{3}$ invariant by the hyperbolic group. 
%\end{corollary}
%\end{remark}

{\sc Case II: If $\psi$ is  a translating solitons ($\alpha=-1$)}
%
%Let $\widetilde{\gamma}$ be a generating curve of a rotational singular $(-1)$-maximal surface $\widetilde{\psi}$ of hyperbolic type.
%\\

From the Proposition \ref{p6}, $\widetilde{\psi}$ could be parametrize (up to translation) by,
\begin{equation}
\label{paramenos1}
\widetilde{\psi}(x,t)=\left(x,(\tanh(z_0) x+u_0)\sinh(t),(\tanh(z_0) x+u_0)\cosh(t)\right), 
\end{equation}
for some  $z_0,x_0\in\mathbb{R}$, where $t\in\R$ and $\tanh(z_0) x+u_0>0$.

\

Consequently, from  \eqref{param} and \eqref{paramenos1} we have 
\begin{theorem}
Let $(\psi,\widetilde{\psi})$ be  a Calabi-pair such that  $\widetilde{\psi}$ is the revolution singular $(-1)$-maximal spacelike surface given by \eqref{paramenos1}. Then, $\psi$ is either the  titled Grim-Reaper  parametrized as
\begin{align*}
 & \psi(y,t)=\left(\frac{y}{\lambda}- \lambda\log(\cosh(t)), 2 \sqrt{1+\lambda^2} \arctan(\tanh(t/2)) , y + \cosh(t))\right),
\end{align*}
if $\lambda=\sinh (z_0)\neq0$, or to the Grim-Reaper parametrized as,
\begin{align*}
&\psi(y,t)=\left(-\frac{y}{u_0},2 \arctan(\tanh(t/2)),\log(u_0\cosh(t))\right),
\end{align*}
if $\lambda=0$ (see Figure \ref{f11}).
\end{theorem}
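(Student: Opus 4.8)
The plan is to compute the Calabi correspondence \eqref{param} explicitly on the concrete data \eqref{paramenos1}, since for $\alpha=-1$ Theorem \ref{th2} already guarantees that the resulting $\psi$ is a translating soliton (the weight $-\varphi\circ\vartheta^{-1}$ becomes linear); the only real task is to integrate the defining vector-valued $1$-form and recognize the two soliton profiles. First I would record the three ingredients entering \eqref{param}: from \eqref{paramenos1}, with $u=u(x)=\tanh(z_0)x+u_0$, the tangent vectors $\widetilde{\psi}_x=(1,\tanh(z_0)\sinh t,\tanh(z_0)\cosh t)$ and $\widetilde{\psi}_t=(0,u\cosh t,u\sinh t)$; from \eqref{Gaussmap} the now $x$-independent Gauss map $\widetilde{N}=(\sinh z_0,\cosh z_0\sinh t,\cosh z_0\cosh t)$ (since along the straight generating line $z\equiv z_0$); and the heights $\ll\widetilde{\psi}_x,\vec{e}_3\gg=-\tanh(z_0)\cosh t$, $\ll\widetilde{\psi}_t,\vec{e}_3\gg=-u\sinh t$.

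Next I would evaluate the Lorentzian triple products using $\ll a\wedge_{\L^3}b,c\gg=\det(a,b,c)$, so that $\vec{e}_3\wedge_{\L^3}v=(-v_2,v_1,0)$, together with $\cosh^2 z_0-\sinh^2 z_0=1$. A direct computation collapses the products to $\widetilde{\psi}_x\wedge_{\L^3}\widetilde{N}=-\tfrac{1}{\cosh z_0}(0,\cosh t,\sinh t)$ and $\widetilde{\psi}_t\wedge_{\L^3}\widetilde{N}=u\,(\cosh z_0,\sinh z_0\sinh t,\sinh z_0\cosh t)$. Applying $\vec{e}_3\wedge_{\L^3}(\cdot)$, subtracting the $\ll d\widetilde{\psi},\vec{e}_3\gg\vec{e}_3$ terms, and multiplying by the weight $(u\cosh t)^{-1}$ as prescribed in \eqref{param}, the integrand reduces to
\[
\omega=\frac{1}{u\cosh z_0}\,(1,0,\sinh z_0)\,dx+\bigl(-\sinh z_0\,\tanh t,\ \cosh z_0\,\tfrac{1}{\cosh t},\ \tanh t\bigr)\,dt .
\]
The decisive structural point is that the $dx$-coefficient depends only on $x$ and the $dt$-coefficient only on $t$; hence $\omega$ is closed (this is precisely the integrability of \eqref{lintegra} underlying Theorem \ref{th2}), and $\psi=\int\omega$ is obtained by integrating each coordinate slot separately.

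Then I would perform the two one-variable integrations. Writing $\lambda=\sinh z_0$, so that $\cosh z_0=\sqrt{1+\lambda^2}$ and $\tanh z_0=\lambda/\sqrt{1+\lambda^2}$, the primitives $\int\frac{dx}{u}=\frac{\cosh z_0}{\lambda}\log u$, $\int\tanh t\,dt=\log\cosh t$ and the standard formula $\int\frac{dt}{\cosh t}=2\arctan(\tanh(t/2))$ yield, after the substitution $y=\log u$,
\[
\psi(y,t)=\Bigl(\tfrac{y}{\lambda}-\lambda\log\cosh t,\ 2\sqrt{1+\lambda^2}\,\arctan(\tanh(t/2)),\ y+\log\cosh t\Bigr),
\]
which is the tilted Grim--Reaper of the statement (the factor $2\cosh z_0=2\sqrt{1+\lambda^2}$ in the second slot, the third coordinate coming out as $y+\log\cosh t$). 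Finally, for $\lambda=0$ the profile $u\equiv u_0$ is constant, the $dx$-integration becomes trivial, and with the reparametrization $y=-x$ one recovers the untilted Grim--Reaper $\psi(y,t)=(-y/u_0,\,2\arctan(\tanh(t/2)),\,\log(u_0\cosh t))$. I expect the genuine obstacles to be only two: fixing the sign convention for $\wedge_{\L^3}$ so that $\widetilde{N}$ is future-pointing and the products collapse cleanly via $\cosh^2 z_0-\sinh^2 z_0=1$, and treating the degeneration at $\lambda=0$, where the parameter $y=\log u$ natural to the tilted case collapses and must be replaced by $x$ itself.
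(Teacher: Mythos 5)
Your proposal is correct and takes essentially the same route as the paper, whose entire proof consists of the remark that the theorem follows from \eqref{param} and \eqref{paramenos1}; you have simply carried out that integration explicitly, and your intermediate cross products, the separation of variables in the resulting closed $1$-form, and the primitives all check out. Note that the third coordinate $y+\log\cosh t$ you obtain is the correct one — the ``$y+\cosh(t)$'' in the statement is a typo, as confirmed by the $\lambda=0$ formula $\log(u_0\cosh(t))$ and by the fact that this slot must equal $\vartheta(\overline{u})=\log(u\cosh t)$.
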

\begin{figure}[h]
\begin{center}
\includegraphics[width=0.2\linewidth]{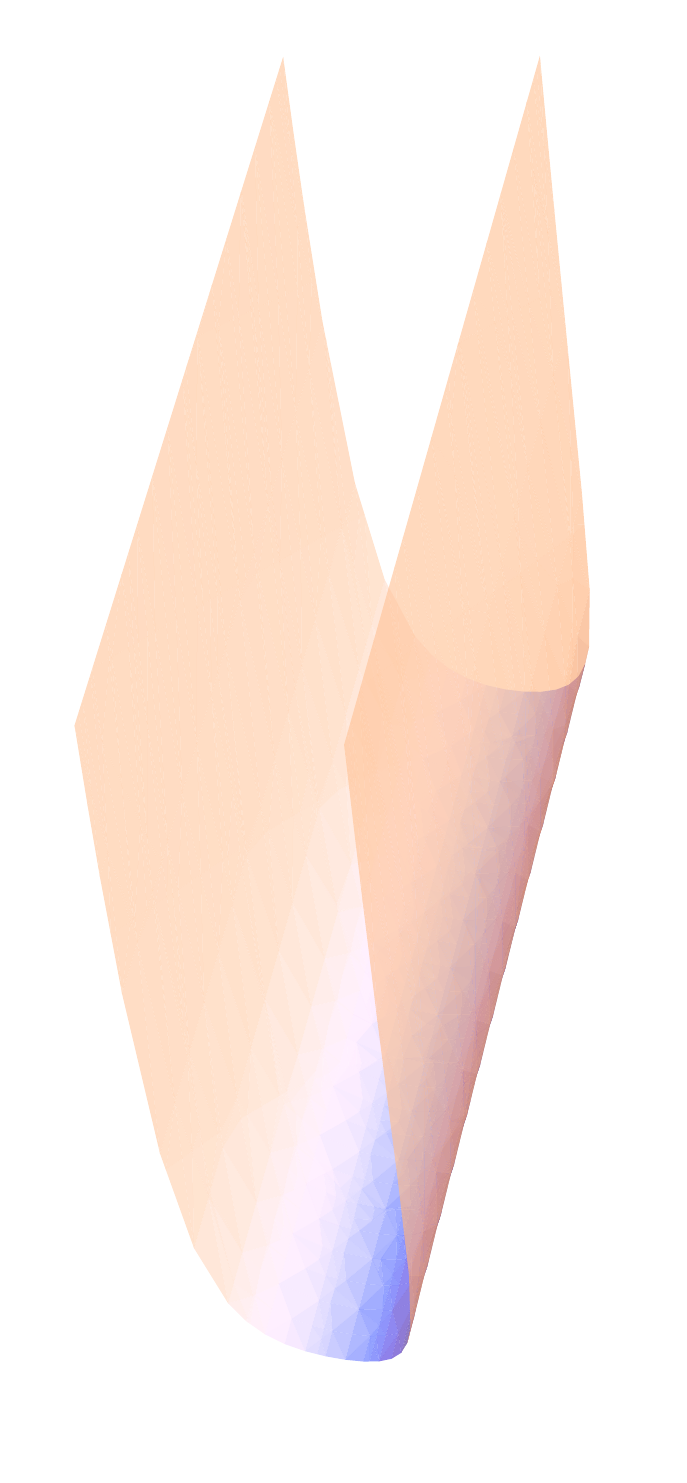} \ \ \ \
\includegraphics[width=0.4\linewidth]{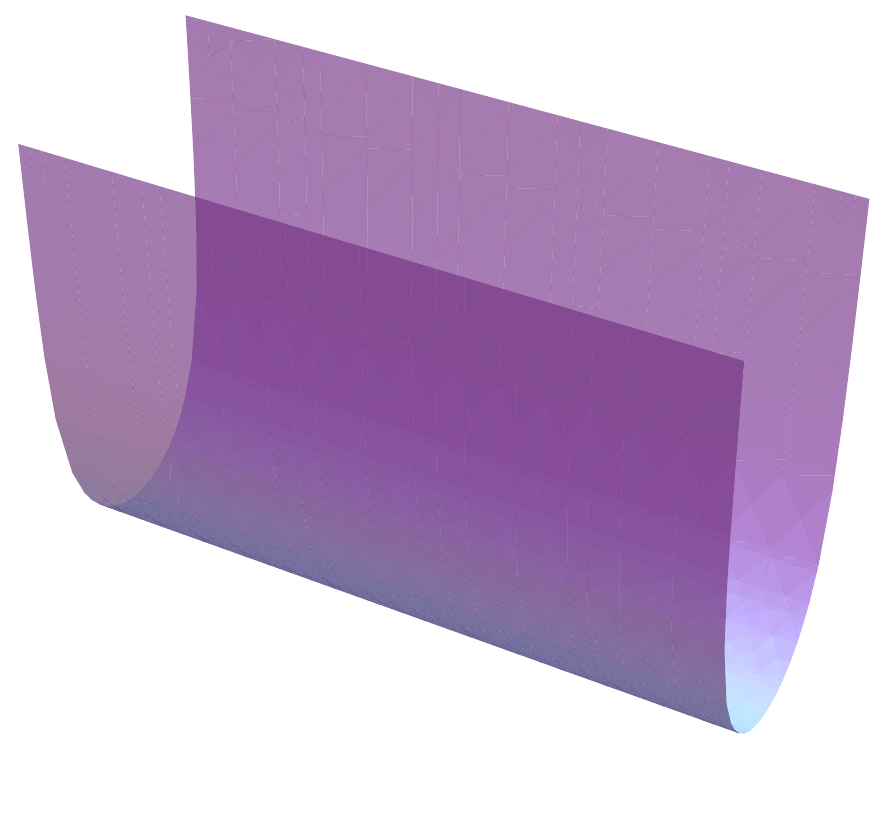} 
\end{center}
\caption{Tilted Grim-Reaper for $\lambda=.5$ and Grim-Reaper for $\lambda=0$.}\label{f11}
\end{figure}
%\begin{proof}
%From the Proposition (\ref{examplesL3}) and the Calabi's correspondence, the Gauss-Kronecker curvature of $\widehat{\Psi}$ vanish. 
%\\
%
%Consequently, it is the parametrization of a Tilted-Grim reaper due to the classification of complete translating solitons in \cite{HIMW} and \cite{MSHS1}.
%\\
%
%On the other hand, by straightforward calculus, we prove the formula (\ref{TGReaper}) from (\ref{param}),(\ref{primerterm}) and (\ref{segundoterm}),
%\begin{equation}
%\label{primerterm}
%\vec{e}_{3}\wedge_{\mathbb{L}_{3}}(dX\wedge_{\mathbb{L}^{3}}N)  = (-\text{cosh}(\theta)\, ds,-y_{0}\, d\theta,0),
%\end{equation}
%\begin{equation}
%\label{segundoterm}
%-\ll d\Psi,\vec{e}_{3}\gg=y_{0}\text{sinh}(\theta)\, d\theta.
%\end{equation}
%\end{proof}
\section{Concluding remarks}
\begin{itemize}
\item Formulas \eqref{globalc} and \eqref{lglobalc} are the source of most the results in this paper and dubtless they will have other applications. For example, by using the classification of flat translating solitons in $\R^3$ given in \cite{HIMW} and \eqref{globalc} one can  give a complete description  of flat singular $(-1)$-maximal spacelike graphs in $\L^3$. In this sense it is interesting to classify complete flat $[\varphi,\vec{e}_3]$-minimal graphs in $\R^3$ at least when $\varphi$ is an strictly increasing (decreasing) diffeomorphism and apply our correspondence to give the corresponding classification result in $\L^3$.
\item The existence of $\alpha$-maximal  bowls of elliptic and hyperbolic type, motivates the interest for obtaining Bernstein type results for this kind of surfaces.
\item More generally, it would be interesting to know if the examples of winglike surfaces described in Theorem \ref{texistencia2} are the only ones either translating solitons or singular $\alpha$-maximal surfaces which are graphs on the  punctured plane $\R^2\backslash \{(0,0)\}$.
\end{itemize}

\end{document}